\newcommand{\ep}{\epsilon}
\newcommand{\me}{\mathcal{E}}
\newtheorem{thm}{Theorem}[section]
\newtheorem{lmm}[thm]{Lemma}
\newtheorem{cor}[thm]{Corollary}
\newtheorem{prop}[thm]{Proposition}
\theoremstyle{definition}
\newcommand{\ee}{\mathbb{E}}
\newcommand{\ma}{\mathcal{A}}
\newcommand{\mx}{\mathcal{X}}
\newcommand{\pp}{\mathbb{P}}
\newcommand{\rr}{\mathbb{R}}
\newcommand{\zz}{\mathbb{Z}}
\numberwithin{equation}{section}
\renewcommand{\hat}{\widehat}
\begin{document}

\title{Speeding up Markov chains with deterministic jumps}
\author{Sourav Chatterjee}
\address{Departments of mathematics and statistics, Stanford University, USA}
\email{souravc@stanford.edu}
\email{diaconis@math.stanford.edu}
\thanks{Sourav Chatterjee's research was partially supported by NSF grant DMS-1855484}
\author{Persi Diaconis}
\thanks{Persi Diaconis's research was partially supported by NSF grant DMS-1954042}
\thanks{Data availability statement: Data sharing not applicable to this article as no datasets were generated or analyzed during the current study}
\keywords{Markov chain, mixing time, spectral gap, Cheeger constant}
\subjclass[2010]{60J10, 60J22}

\dedicatory{In memory of Harry Kesten}

\begin{abstract}
We show that the convergence of finite state space Markov chains to stationarity can often be considerably  speeded up by alternating every step of the chain with a deterministic move. Under fairly general conditions, we show that not only do such schemes exist, they are numerous.
\end{abstract} 

\maketitle


\section{Introduction}\label{intro}
This paper started from the following example. Consider the simple random walk on $\zz_n$ (the integers mod $n$): 
\[
X_{k+1}=X_k+\ep_{k+1} \pmod n,
\]
with $X_0=0$, and $\ep_1,\ep_2,\ldots$ i.i.d.~with equal probabilities of being $0$, $1$ or $-1$. This walk takes order $n^2$ steps to reach its uniform stationary distribution in total variation distance. This slow, diffusive behavior is typical of low dimensional Markov chains (``Diameter$^2$ steps are necessary and sufficient'' --- see \citet{dsc} for careful statements and many examples). Now change the random walk by deterministic doubling:
\begin{align*}
X_{k+1} = 2X_k + \ep_{k+1} \pmod n.
\end{align*}
This walk has the ``same amount of randomness''. Results discussed below show that it takes order $\log n$ steps to mix (at least for almost all $n$). 

We would like to understand this speedup more abstractly --- hopefully to be able to speed up real world Markov chains. Our main result shows a similar speedup occurs for fairly general Markov chains with deterministic doubling replaced by almost any bijection of the state space into itself. We proceed to a careful statement in the next section. In Section \ref{review} a literature review offers pointers to related efforts to beat diffusive behavior. A sketch of the proof is in Section \ref{sketchsec}. Proofs are in Sections  \ref{proofsec} and \ref{proof2}. A different kind of deterministic speedup,
\[
X_{k+1} = X_k + X_{k-1} + \ep_{k+1}\pmod n,
\]
is analyzed in Section \ref{fibosec}. The last section has applications and some interesting open questions. 

\section{Speedup by deterministic functions}
Let $S$ be a finite set and $P = (p_{ij})_{i,j\in S}$ be a Markov transition matrix on $S$. We assume the following conditions on $P$:
\begin{enumerate}
\item The Markov chain generated by $P$ is irreducible and aperiodic.
\item If $p_{ij}>0$ for some $i,j\in S$, then $p_{ji}>0$.
\item For each $i\in S$, $p_{ii}>0$.
\item The uniform distribution on $S$  is the stationary measure of $P$. We will call it $\mu$.
\end{enumerate}
In addition to the above assumptions, we define  
\[
n := |S|
\]
and 
\begin{align}\label{deltadef}
\delta := \min\{p_{ij}: i,j\in S, \, p_{ij}>0\}.
\end{align}
Let $f:S\to S$ be a bijection. Consider a new Markov chain defined as follows. Apply $f$, and then take one step according to $P$. This is one step of the new chain. The transition matrix for the new chain is
\[
Q = \Pi P,
\]
where $\Pi = (\pi_{ij})_{i,j\in S}$ is the permutation matrix defined by $f$. (That is, $\pi_{ij}=1$ if $j=f(i)$ and $0$ otherwise.)  Clearly, $\mu$ is a stationary measure for the new chain too. The main questions that we will try to answer are the following: (a) Under what conditions on $f$ does this new chain mix quickly --- let's say, in time $\log n$? (b) Is it always possible to find $f$ satisfying these conditions? (c) If so, are such functions rare or commonplace?

Our first theorem answers question (a), by giving a sufficient condition for fast mixing of $Q$.
\begin{thm}\label{thm1}
For any $A\subseteq S$, let $\me(A)$ be the set of $j\in S$ such that $p_{ij}>0$ for some $i\in A$. Suppose that there is some $\ep\in (0,1)$ such that for any $A$ with $|A|\le n/2$,
\begin{equation}\label{expcond}
|\me\circ f\circ \me(A)|\ge (1+\ep)|A|.
\end{equation}
Let $X_0,X_1,\ldots$ be a Markov chain with transition matrix $Q$. For each $i\in S$ and $k\ge 1$ let $\mu_i^k$ be the law of $X_k$ given $X_0=i$. Then 
\[
\|\mu_i^k- \mu\|_{TV} \le \frac{\sqrt{n}}{2} \biggl(1-\frac{\ep^2\delta^8}{2}\biggr)^{(k-2)/4},
\]
where $\delta$ is the quantity defined in \eqref{deltadef} and $TV$ stands for total variation distance.
\end{thm}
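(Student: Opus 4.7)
My plan is an $L^2$-contraction argument on the mean-zero subspace, with the expansion hypothesis \eqref{expcond} entering through a Cheeger estimate on a reversible symmetrization of $Q^2$. First, by Cauchy--Schwarz, $\|\mu_i^k - \mu\|_{TV} \le \tfrac{1}{2}\sqrt{n}\,\|\mu_i^k - \mu\|_{2}$. Conditions (2)--(4) imply that $Q = \Pi P$ is doubly stochastic, so $\mathbf{1}$ is both a top left and right singular vector, $Q$ preserves $V := \mathbf{1}^\perp$, and on $V$ the operator norm of $Q^k$ equals the second singular value $\sigma_2(Q^k)$. Writing $\mu_i^k - \mu = (Q^T)^k(e_i - \tfrac{1}{n}\mathbf{1})$ with $e_i - \tfrac{1}{n}\mathbf{1} \in V$ of $\ell^2$-norm at most $1$, it then suffices to bound $\sigma_2(Q^k)$.

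The crux is to bound $\sigma_2(Q^2)$. Consider the matrix
\[
M := Q^2(Q^T)^2,
\]
which is symmetric, positive semidefinite, and doubly stochastic, hence a reversible Markov chain whose eigenvalues are the squared singular values of $Q^2$; in particular $\sigma_2(Q^2)^2 = \lambda_2(M)$. Cheeger's inequality in the form $1 - \lambda_2(M) \ge \tfrac{1}{2} h(M)^2$ reduces the task to a conductance lower bound for $M$.

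To estimate $h(M)$, fix $A \subseteq S$ with $|A| \le n/2$ and set $a_l := \sum_{i \in A} Q^2(i,l)$. Double stochasticity of $Q^2$ gives the identity
\[
\sum_{i \in A,\, j \in A^c} M(i,j) \;=\; \sum_l a_l(1 - a_l) \;=\; |A| - \sum_l a_l^2,
\]
so it suffices to upper bound $\sum_l a_l^2$. Two structural facts about $a$ come out of the hypotheses: unfolding $Q = \Pi P$ shows $\supp(a) = \me(f(\me(f(A))))$, and applying \eqref{expcond} to $f(A)$ (of the same cardinality as $A$ by bijectivity of $f$) yields $|\supp(a)| \ge (1+\ep)|A|$; moreover each $l \in \supp(a)$ is reached from some $i \in A$ in two $Q$-steps of probability at least $\delta^2$, so $a_l \ge \delta^2$ on $\supp(a)$. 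Combined with $\sum_l a_l = |A|$ and $a_l \le 1$, a short extremal argument---the maximum of $\sum_l a_l^2$ subject to these constraints is attained at an $\{\delta^2, 1\}$-valued configuration---yields an upper bound of the form $\sum_l a_l^2 \le |A|(1 - c_1\ep\,\delta^{c_2})$, which by Cheeger translates to $\sigma_2(Q^2)^2 \le 1 - \tfrac{1}{2}\ep^2\delta^8$.

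Finally, submultiplicativity $\sigma_2(Q^{2m}) \le \sigma_2(Q^2)^m$ (valid since $Q^2$ preserves $V$), combined with the trivial bound $\sigma_2(Q) \le 1$ to absorb the parity of $k$, yields $\sigma_2(Q^k) \le (1 - \tfrac{1}{2}\ep^2\delta^8)^{(k-2)/4}$, which together with the first step completes the proof. The main technical obstacle is the conductance estimate: crude bounds such as $a_l \le 1$ alone give no contraction, and one must couple the support-expansion coming from \eqref{expcond} with the uniform lower bound on positive entries of $a$ in a coordinated way. Matching the precise power $\delta^8$ (rather than the $\delta^4$ that emerges from the most direct analysis with $Q^2$) may well require working with $Q^4(Q^T)^4$ instead, so that entries have the tighter lower bound $\delta^4$ on their support.
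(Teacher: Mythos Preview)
Your approach is essentially the paper's: reduce to an $\ell^2$ contraction on $\mathbf{1}^\perp$, pass to the reversible symmetrization of the square (the paper uses $R=L^2(L^T)^2$ with $L=P\Pi$, which is conjugate to your $M=Q^2(Q^T)^2$ via $M=\Pi R\Pi^T$), and bound the second eigenvalue by Cheeger with the expansion hypothesis feeding the conductance. The only real difference is in how the conductance is estimated: the paper lower-bounds positive entries by $r_{ij}\ge\delta^4$ and counts $|A'\setminus A|$ to get $\Phi\ge\ep\delta^4$, whereas your identity $\sum_{i\in A,\,j\in A^c}M(i,j)=\sum_l a_l(1-a_l)$ together with $a_l\ge\delta^2$ on $\supp(a)$ gives (using $a_l(1-a_l)\ge\delta^2(1-a_l)$ for $a_l\in[\delta^2,1]$) conductance $\ge\ep\delta^2$ and hence $\lambda_2(M)\le 1-\tfrac12\ep^2\delta^4$. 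This is \emph{stronger} than the theorem's $\delta^8$, so your closing worry is backwards: since $\delta\le 1$, your bound already implies the stated one, and there is nothing to patch and no need to pass to $Q^4(Q^T)^4$.
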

This result shows that if we have a sequence of problems where $n\to\infty$ but $\ep$ and $\delta$ remain fixed, and the condition \eqref{expcond} is satisfied, then the mixing time of the $Q$-chain is of order $\log n$. 

Let us now try to understand the condition \eqref{expcond}. The set $\me(A)$ is the `one-step expansion' of $A$, that is, the set of all states that are accessible in one step by the $P$-chain if it starts in $A$. By the condition that $p_{ii}>0$ for all $i$, we see that $\me(A)$ is a superset of $A$. The condition \eqref{expcond} says that for any $A$ of size $\le n/2$, if we apply the one-step expansion, then apply $f$ and then again apply the one-step expansion, the resulting set must be larger than $A$ by a fixed factor.


We found it difficult to produce explicit examples of functions that satisfy \eqref{expcond} with $\ep$ independent of $n$. Surprisingly, the following theorem shows that they are extremely abundant, even in our general setting, thereby answering both the questions (b) and (c) posed earlier.
\begin{thm}\label{thm2}
Let all notation be as in Theorem \ref{thm1}. There are universal  constants $C_1, C_2>0$ and $C_3\in (0,1)$ such that all but a fraction $C_1 n^{-C_2/\delta}$ of bijections $f$ satisfy \eqref{expcond} with $\ep = C_3 \delta$. Consequently, for all but a fraction $C_1 n^{-C_2/\delta}$ of bijections $f$, the corresponding $Q$-chain satisfies, for all $i\in S$ and $k\ge 1$,
\[
\|\mu_i^k- \mu\|_{TV} \le \frac{\sqrt{n}}{2} \biggl(1-\frac{C_3^2\delta^{10}}{2}\biggr)^{(k-2)/4}.
\]
\end{thm}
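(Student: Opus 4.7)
The plan is to apply Theorem~\ref{thm1} with $\ep = C_3 \delta$: once we show that a uniformly random bijection $f \colon S \to S$ fails condition \eqref{expcond} with this $\ep$ on an event of probability at most $C_1 n^{-C_2/\delta}$, the desired mixing bound follows by substituting $\ep^2 \delta^8 = C_3^2 \delta^{10}$ into the conclusion of Theorem~\ref{thm1} on the complement. The problem thus reduces to a high-probability statement about random permutations, and I would attack it by a union bound over $A \subseteq S$ with $|A|\le n/2$.

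Fix such an $A$; write $a = |A|$, $B = \me(A)$, $b = |B|$, and $d_j := |\me(\{j\})|$. The row-sum identity together with $\delta = \min\{p_{ij}: p_{ij}>0\}$ forces $d_j \le 1/\delta$, while $p_{ii}>0$ combined with irreducibility forces $d_j \ge 2$. Two observations kill the easy cases. First, if $b \ge (1+\ep)a$, then $|\me(f(B))| \ge |f(B)| = b \ge (1+\ep)a$ for every $f$, so the bad event for $A$ is impossible. Second, if $a < 1/\ep$, then $\lceil(1+\ep)a\rceil - 1 = a$, and the bad event $|\me(f(B))| \le a$ combined with $|\me(f(B))| \ge b \ge a$ would force $\me(f(B)) = f(B)$, which by irreducibility requires $f(B) = S$ and hence $b = n$, contradicting $b < (1+\ep)a \le (1+\ep)n/2$. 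Only the range $1/(C_3\delta) \le a \le b < (1+\ep)a$ remains.

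In this regime I use that for uniform $f$, the image $I := f(B)$ is a uniform random $b$-subset of $S$, so the bad event probability depends only on $b$ and on the underlying graph. Using $d_j \ge 2$,
\[
\E|\me(I)| \;=\; n - \sum_j \binom{n-d_j}{b}\bigg/\binom{n}{b} \;\ge\; b(2 - b/n) \;\ge\; 3b/2 \quad \text{for } b \le n/2,
\]
which is well above the target $(1+\ep)a \le (1+\ep)b$. Since altering one element of $I$ changes $|\me(I)|$ by at most the maximum degree $1/\delta$, a bounded-differences (McDiarmid) inequality for uniform random subsets yields
\[
\Pr\bigl[\,|\me(I)| < (1+\ep)a\,\bigr] \;\le\; \exp\bigl(-c\, a\, \delta^{2}\bigr)
\]
for an absolute $c>0$, provided $C_3$ is chosen so that $\ep \le 1/4$.

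The main obstacle is turning this per-$A$ estimate into the global tail $C_1 n^{-C_2/\delta}$, since the naive sum $\sum_a \binom{n}{a}\exp(-ca\delta^2)$ is not small enough when $\delta$ is small, failing both in the intermediate range $a \asymp 1/\delta$ and for $a$ of order $n$. My plan to close the gap is twofold. In the intermediate range I would replace the sum over $A$ by a sum over images $B$, using the multiplicity bound $|\{A : \me(A) = B\}| \le \binom{b}{a}$ (since any such $A$ lies inside $B^{\star} := \{i : \me(\{i\})\subseteq B\}$, which has size at most $b$), so that the effective count becomes $\binom{n}{b}\binom{b}{a}$ rather than $\binom{n}{a}$. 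For $a$ of order $n$, I would supplement McDiarmid with a direct combinatorial count of $b$-subsets $C$ whose expansion $|\me(C)|$ is at most $(1+\ep)b$, by noting that every such $C$ must embed into a near-closed set of size at most $(1+\ep)b$, whose total count can be controlled via the degree constraint $d_j \le 1/\delta$. Balancing the three regimes with a small enough $C_3$ then delivers the advertised bound with universal constants $C_1, C_2, C_3$.
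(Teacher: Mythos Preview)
Your reduction and the two easy cases ($b \ge (1+\ep)a$ and $a < 1/\ep$) are correct and match the paper. The hard case, however, has a genuine gap.

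Your fix \#1 does not help: $\binom{n}{b}\binom{b}{a} = \binom{n}{a}\binom{n-a}{b-a} \ge \binom{n}{a}$, so passing from $A$ to $B=\mathcal{E}(A)$ and then summing over \emph{all} $b$-subsets with the multiplicity bound only enlarges the count.

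More importantly, you are missing the central idea (which the paper credits to Ganguly--Peres). Write $\mathcal{A}_\ep$ for the collection of $A$ with $|\mathcal{E}(A)| < (1+\ep)|A|$ and $\mathcal{A}_{\ep,m}$ for those of size $m$. A bound on $|\mathcal{A}_{\ep,m}|$ does \emph{double duty}. First, the union bound need only run over $A\in\mathcal{A}_\ep$, since for $A\notin\mathcal{A}_\ep$ the expansion holds deterministically; this replaces your $\binom{n}{a}$ by $|\mathcal{A}_{\ep,m}|$ (with $m=|\mathcal{E}(A)|$). Second, for each such $A$ the image $f(\mathcal{E}(A))$ is uniform over $m$-subsets, so the bad event $f(\mathcal{E}(A))\in\mathcal{A}_\ep$ has probability \emph{exactly} $|\mathcal{A}_{\ep,m}|/\binom{n}{m}$ --- no concentration inequality is needed. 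The total is therefore $\sum_m |\mathcal{A}_{\ep,m}|^2/\binom{n}{m}$, and the $1/\binom{n}{m}$ factor is indispensable. Your McDiarmid bound $\exp(-ca\delta^2)$ is only a constant at the bottom of the range $a\sim 1/\delta$, and even combined with the restricted union bound it fails for $m$ of order $n$, since $|\mathcal{A}_{\ep,m}|$ can be as large as $\binom{n}{\ep m}2^{\ep m/\delta}$, which for $\ep=C_3\delta$ contains a factor $2^{C_3 m}$ that $\exp(-cm\delta^2)$ cannot beat when $\delta$ is small.

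What remains is to bound $|\mathcal{A}_{\ep,m}|$, i.e., the number of sets with external boundary of size $<\ep m$. Your fix \#2 correctly names this as the missing ingredient, but ``via the degree constraint $d_j\le 1/\delta$'' is not an argument. The paper's method is this: for fixed $A$, call $i,j\in A^c$ equivalent if some path from $i$ to $j$ in the $P$-graph avoids $A$. Any $B$ with $\partial B=A$ must be a union of equivalence classes, and every class contains a neighbor of $A$; since $|\partial A|\le |A|/\delta$ there are at most $|A|/\delta$ classes and hence at most $2^{|A|/\delta}$ choices of $B$. Thus the number of $B$ with $|\partial B|=k$ is at most $\binom{n}{k}2^{k/\delta}$, giving $|\mathcal{A}_{\ep,m}|\le\sum_{k<\ep m}\binom{n}{k}2^{k/\delta}$. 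Plugging this into $\sum_m |\mathcal{A}_{\ep,m}|^2/\binom{n}{m}$ and choosing $\ep$ a small multiple of $\delta$ then yields the claimed $n^{-C_2/\delta}$ by routine estimates.
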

For instance, if the $P$-chain is the lazy random walk on $\zz_n$ with equal probabilities of taking one step to the left, or one step to the right, or not moving, then $\delta = 1/3$. So in this example this result shows that for all but a fraction $C_1 n^{-3C_2}$ of bijections of $\zz_n$, the corresponding $Q$-chain mixes in time $O(\log n)$. This is a striking improvement over the lazy random walk, which mixes in time $O(n^2)$.

Interestingly, the doubling random walk discussed in Section \ref{intro} {\it does not} satisfy the expansion condition \ref{expcond} with $\ep$ independent of $n$. To see this, take $n=4m-1$ for large enough $m$, and define
\[
A = \{1,2,\ldots,m-1\} \cup \{2m+1,\ldots, 3m-1\}.
\]
Then
\[
\me(A) = \{0,1,\ldots, m\} \cup \{2m, \ldots, 3m\}.
\]
So if $f$ is the doubling map, an easy calculation shows that
\[
f\circ \me(A) = \{0,1,2,\ldots, 2m+1\},
\]
and therefore
\[
\me\circ f\circ \me(A) = \{0,1,\ldots, 2m+2, n-1\}.
\]
So we have $|A| = 2m-2\le n/2$ and $|\me\circ f\circ\me(A)| = 2m+4$, which means that the condition \eqref{expcond} cannot hold with $\ep > 6/(2m-2)$. Therefore the speedup of the doubling walk is {\it not} happening due to expansion, although as Theorem \ref{thm2} shows, expansion is the reason for speedup for ``most'' bijections. 


\section{Related previous work}\label{review}
The deterministic doubling example comes from work of \citet{cdg87}. They found that for almost all $n$, $1.02\log_2 n$ steps suffice. \citet{hildebrand} showed that $1.004\log_2n$ steps are not enough for mixing. In a recent tour-de-force, \citet{ev20} show that for almost all $n$, there is a cutoff at $c\log_2n$ where $c = 1.01136...$. Two companion papers by \citet{bv19, bv18} relate this problem to irreducibility of random polynomials and very deep number theory. All of these authors consider $X_{k+1}=aX_k+\ep_{k+1}\pmod n$ for general $a$ and $\ep_i$'s having more general distributions. Related work is in \citet{hildebrand} and the Ph.D.~thesis of \citet{neville}. One natural extension is that if $G$ is a finite group with $\ep_1,\ep_2,\ldots$ i.i.d.~from a probability $\mu$ on $G$, the random walk $X_{k+1}=X_k\ep_{k+1}$ may be changed to $X_{k+1}=A(X_k)\ep_{k+1}$ where $A$ is an automorphism of $G$. This is studied in \citet{dg1, dg2} who treat $G = \zz_p^n$ with a matrix as automorphism. Similar speedups occur. 

A very rough heuristic to explain the speedup in our Theorem \ref{thm2} goes as follows. Composing the original random walk step with a random bijection mimics a jump along the edge of a random graph. Since random graphs are usually expanders, and random walks on expanders with $n$ vertices mix in $O(\log n)$ steps, we may naturally expect something like Theorem \ref{thm2} to be true. Remarkable work has been done on rates of convergence of random walks on $d$-regular expanders. For instance, \citet{lubetzkysly} show that random walks on $d$-regular random graphs have a cutoff for total variation mixing at $\frac{d}{d-2}\log_{d-1}n$.  \citet{lubetzkyperes} prove a similar result for Ramanujan graphs, with cutoff at $\frac{d}{d-1}\log_{d-1}n$. Related work has recently appeared in \citet{sousi}, who show that if you begin with any bounded degree graph on $2n$ vertices (for which all connected components have size at least $3$) and add a random perfect matching, the resulting walk has a cutoff and mixes in time $O(\log n)$. 

A setup very similar to ours, with different results, has been considered recently by \citet{bqz18}. This paper studies a random bistochastic matrix of the form $\Pi P$, where $\Pi$ is a uniformly distributed permutation matrix and $P$ is a given bistochastic matrix. Under sparsity and regularity assumptions on $P$, the authors prove that the second largest eigenvalue of $\Pi P$ is essentially bounded by the normalized Hilbert--Schmidt norm of $P$. They apply the result to random walks on random regular digraphs.

A different, related, literature treats piecewise deterministic Markov processes. Here, one moves deterministically for a while, then at a random time (depending on the path) a random big jump is made. This models the behavior of biological systems and chemical reactions. A splendid overview is in \citet{mal15} (see also \citet{benaim}). There is a wide swath of related work in a non-stochastic setting where various extra ``stirring'' processes are shown to speed up mixing and defeat diffusive behavior. See \citet{constantin}, \citet{ottino} and \citet{rallabandi}. The papers of \citet{dingperes}, \citet{hermon18} and \citet{hermonperes} are Markov chain versions of these last results. 

There are a host of other procedures that aim at defeating diffusive behavior. For completeness, we highlight four:
\begin{itemize}
\item Event chain Monte Carlo.
\item Hit and run.
\item Lifting/non-reversible variants.
\item Adding more moves.
\end{itemize}
Each of these is in active development; searching for citations to the literature below should bring the reader up to date.

While ``random walk'' type algorithms are easy to implement, it is by now well known that they suffer from ``diameter$^2$'' behavior. To some extent that remains true for general reversible Markov chains. Indeed, \citet{neal} showed that the spectral gap of {\it any} reversible Markov chain can be improved by desymmetrizing. Of course, the spectral gap is a crude measure of convergence. A host of sharpenings are in \citet{diaconismiclo}.

Non-reversible versions of Hamiltonian Monte Carlo introduced in \citet{dhn00} have morphed into a variety of ``lifting algorithms'' culminating in the event chain algorithm of \citet{krauth}. This revolutionized the basic problem of sampling hard disk configurations in statistical mechanics. The hard disk problem was the inspiration for three basic tools: The Metropolis algorithm, the Gibbs sampler (Glauber dynamics) and molecular dynamics.  All gave wrong conclusions for the problems of interest; the new algorithms showed that entirely new pictures of phase transitions are needed. It is an important open problem to generalize these algorithms to more general stationary distributions. 

Recent, closely related work is in \citet{bordenavelacoin} and \citet{conchonkerjan}. Written independently, both papers study `lifting' random walks to covering graphs to speed up mixing and both prove that most liftings have cutoffs.  The first paper also studies non-reversible chains. The paper of \citet{gerencserhendrickx} adds `long distance edges' (as in small world graphs)  and non-reversibility to get speedups. It too has close connections with the present project. 


A final theme is ``hit and run'': Taking long steps in a chosen direction instead of just ``going $\pm1$''. For a survey, see \citet{andersondiaconis}. It has been difficult to give sharp running time estimates for these algorithms (even though they are ``obviously better''). For a recent success, see the forthcoming manuscript of  \citet{bsc20}. 

It is natural to try to speed up convergence by adding more generators.  This is a largely open problem. For example, consider the simple random walk on the hypercube $\zz_2^d$. The usual generating set is $\{0,e_1,\ldots,e_d\}$, where $e_i$ is the $i^{\textup{th}}$ standard basis vector. At each step, one of these vectors is added with probability $1/(d+1)$. It is well known that $\frac{1}{4}d\log d +O(d)$ steps are necessary and sufficient for mixing (\citet{dsh}). Suppose you want to add more generators (say $2d+1$ generators) --- what should you choose? \citet{wilson97} studied this problem and showed that almost all generating sets of size $2d+1$ mix in time $0.2458 d$ steps. Mirroring our results above, his methods do not give an explicit choice effecting this improvement. 

These problems are interesting, and largely open, even for the cyclic group $\zz_p$ when $p$ is a prime. As above, the generating set $\{0,\pm1\}$ take order $p^2$ steps to mix. On the other hand, almost all sets of $3$ generators mix in order $p$ steps. \citet{green} found $3$ generators (roughly, $1$, $p^{1/3}$ and $p^{2/3}$) that do the job. References to this literature can be found in a survey by \citet{hilde}. 

There has been vigorous work on adding and deleting random edges at each stage (dynamical configuration model). See the paper of \citet{avena}, which also has pointers to another well studied potential speedup --- the non-backtracking random walk. 

Finally we note that natural attempts at speeding things up may fail. Cutting cards between shuffles and systematic versus random scan in the Gibbs sampler are examples. For details, see \citet{diaconis}. 

\section{Proof sketch}\label{sketchsec}
For the proof of Theorem \ref{thm1}, we define an auxiliary chain with kernel $R = L^2 (L^T)^2$, where $L := P\Pi$. This kernel is symmetric and positive semidefinite, and defines a reversible Markov chain. The proof has two steps --- first, relate the rate of convergence of the $R$-chain with that of the $Q$-chain, and second, use the reversibility of the $R$-chain to invoke Cheeger's bound for the spectral gap. A lower bound on the Cheeger constant is then obtained using condition \eqref{expcond}.

The proof of Theorem \ref{thm2} has its root in a beautiful idea from a paper of \citet{gangulyperes} (see also the related earlier work of \citet{pymarsousi}).  In our context, the idea translates to showing the following: Let $f$ be a uniform random bijection. First, show that ``most'' sets $A$ of size $\le 3n/4$ have the property that $|\me(A)|\ge (1+\ep)|A|$ for some suitable fixed $\ep>0$. Since $|\me(B)|\ge |B|$ and $|f(B)|=|B|$ for all $B$, this implies that \eqref{expcond} is satisfied for ``most'' $A$ of size $\le n/2$. But we want \eqref{expcond} to hold for {\it all} $A$ of size $\le n/2$. The Ganguly--Peres idea is to circumvent this difficulty in the following ingenious way. Since we know that $|\me(B)|< (1+\ep)|B|$ for only a small number of $B$'s of size $\le 3n/4$, it is very unlikely that for any given set $B$, $f(B)$ has this property. Consequently, for a given set $A$ such that $|\me(A)|\le 3n/4$, it is very unlikely that $f(\me(A))$ has this property (taking $B=\me(A)$). Thus, for any such set $A$, it is very likely that
\[
|\me\circ f\circ \me(A)|\ge (1+\ep)|f(\me(A))|\ge (1+\ep)|A|.
\]
Since the above event is very likely for any given $A$, it remains very likely if we take the intersection of such events  over a suitably small set of $A$'s. So we now take this small set to be precisely the set of $A$'s that did not satisfy $|\me(A)|\ge (1+\ep)|A|$ --- that is, our rogue set that spoiled \eqref{expcond}. By some calculations, we can show that this indeed happens, and so \eqref{expcond} holds for all $A$ with $|A|\le n/2$. 

In \cite{gangulyperes}, the Ganguly--Peres idea was applied to solve a different problem for a random walk on points arranged in a line. As noted above, the key step is to show that the set of all $A$ of size $\le 3n/4$ that violate $|\me(A)|\ge (1+\ep)|A|$ is a small set. Since we are working in a general setup here, the proof of this step from \cite{gangulyperes} does not generalize; it is proved here by a new method. 

\section{Proof of Theorem \ref{thm1}}\label{proofsec}
Let $L := P\Pi$. Since the uniform distribution on $S$ is the stationary measure of $P$, we deduce that $P$ is a doubly stochastic matrix. Since $\Pi$ is a permutation matrix, it is automatically doubly stochastic. Since the product of doubly stochastic matrices is doubly stochastic, we see that $L$ is also doubly stochastic. Thus, $L$ and $L^T$ are both stochastic matrices, and so is any power of either of these matrices. Consequently, the matrix 
\[
R := L^2 (L^T)^2 = P\Pi P\Pi\Pi^T P^T \Pi^TP^T = P\Pi PP^T \Pi^T P^T.
\]
is a stochastic matrix.  (This identity explains why we symmetrize $L^2$ instead of $L$. Indeed, $LL^T = P\Pi \Pi^T P^T = PP^T$, and the effect of $\Pi$ disappears.) 
\begin{lmm}\label{rproplmm}
The Markov chain with transition matrix $R$ is irreducible, aperiodic and reversible, with uniform stationary distribution. Moreover, $R$ is symmetric and positive semidefinite. 
\end{lmm}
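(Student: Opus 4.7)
My plan is to exploit the factorization $R = MM^T$ with $M := L^2$, which delivers most of the claimed properties essentially for free. The excerpt already establishes that $L$, and hence $M$ and $R$, are doubly stochastic, so $\mu$ is stationary. Writing $R = MM^T$ directly shows $R^T = R$ (symmetry) and $x^T R x = \|M^T x\|^2 \geq 0$ (positive semidefiniteness). Symmetry together with uniform stationarity is exactly the detailed balance relation $\mu_i R_{ij} = \mu_j R_{ji}$, so the chain is reversible. Aperiodicity is equally easy: $R_{ii} = \sum_k M_{ik}^2$, and since row $i$ of $M$ is a probability vector at least one $M_{ik}$ is positive, so $R_{ii}>0$ for every $i$.

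The only substantive step, which I expect to be the main obstacle, is irreducibility. The plan is to prove the stronger claim that whenever $p_{uv}>0$ one also has $R_{uv}>0$. Granting this, irreducibility of $P$ produces, for any two states $i,j$, a sequence $i=v_0,v_1,\ldots,v_m=j$ with $p_{v_r,v_{r+1}}>0$ for every $r$, and each consecutive pair is then also an $R$-edge, so the support graph of $R$ is connected.

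To prove the implication, it suffices to exhibit a common two-step $L$-descendant of $u$ and $v$, because $R_{uv} = \sum_k (L^2)_{uk}(L^2)_{vk}$. My candidate is $k := f^2(v)$. The identity $L_{x,f(y)} = P_{x,y}$, together with the self-loop hypothesis $p_{ii}>0$ and the edge hypothesis $p_{uv}>0$, shows that both two-step paths $u \to f(v) \to f^2(v)$ and $v \to f(v) \to f^2(v)$ carry positive $L$-weight, so $(L^2)_{u,f^2(v)}$ and $(L^2)_{v,f^2(v)}$ are both positive; hence $R_{uv}>0$. Once the right $k$ is spotted, the remaining verification is routine.
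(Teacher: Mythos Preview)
Your proposal is correct and follows essentially the same approach as the paper. Both arguments obtain symmetry and positive semidefiniteness from the form $R=MM^T$, derive reversibility from symmetry, get aperiodicity from $R_{ii}>0$, and prove irreducibility by showing that $p_{uv}>0$ forces $R_{uv}>0$ via a specific nonzero summand; your common descendant $k=f^2(v)$ corresponds, after cancelling the middle $\Pi\Pi^T$, to the paper's choice of the intermediate state $f(v)$ in the expansion $R=P\Pi P\,P^T\Pi^T P^T$, and the resulting lower bound $R_{uv}\ge p_{uv}\,p_{f(v)f(v)}^{2}\,p_{vv}$ is identical.
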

\begin{proof}
It is evident from the definition of $R$ that $R$ is symmetric and positive semidefinite. Since $R$ is symmetric, the Markov chain with transition matrix $R$ is reversible with respect to the uniform distribution on the state space.


Since $p_{ii}>0$ for any $i\in S$, it follows that
\begin{align}\label{rposit}
r_{ii} \ge p_{ii} \pi_{if(i)}p_{f(i)f(i)} p_{f(i)f(i)} \pi_{if(i)} p_{ii} >0.
\end{align}
Thus, $R$ is aperiodic. Next, note that if $p_{ij}>0$, then  
\begin{equation}\label{rlower}
r_{ij} \ge p_{ij} \pi_{jf(j)}p_{f(j)f(j)} p_{f(j)f(j)} \pi_{jf(j)} p_{jj} >0.
\end{equation}
Since $P$ is irreducible, this shows that $R$ is also irreducible.
\end{proof}

\begin{cor}\label{rcor}
The principal eigenvalue of $R$ is $1$, the principal eigenvector is the vector of all $1$'s (which we will henceforth denote by $1$), and the principal eigenvalue has multiplicity one. The second largest eigenvalue of $R$ (which we will henceforth call $\lambda_2$), is strictly less than $1$. All eigenvalues of $R$ are nonnegative.
\end{cor}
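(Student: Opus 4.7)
The plan is to extract each claim of the corollary directly from the properties of $R$ established in Lemma \ref{rproplmm}. First, since $R$ is doubly stochastic (being a product of doubly stochastic matrices, as observed in the proof of Lemma \ref{rproplmm}), the row sums of $R$ are all equal to $1$, hence $R \mathbf{1} = \mathbf{1}$, so $\mathbf{1}$ is a right eigenvector with eigenvalue $1$. Combined with the fact that $R$ is a stochastic matrix (so $\|Rx\|_\infty \le \|x\|_\infty$ for all $x$, giving spectral radius at most $1$), this shows that $1$ is the principal eigenvalue.

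Next, for the simplicity of the eigenvalue $1$ and the strict inequality $\lambda_2 < 1$, I would invoke the Perron--Frobenius theorem for irreducible aperiodic stochastic matrices. By Lemma \ref{rproplmm}, $R$ is irreducible and aperiodic, and so $1$ is a simple eigenvalue of $R$, and every other eigenvalue has modulus strictly less than $1$. This immediately yields both the multiplicity-one statement for the principal eigenvalue and the bound $\lambda_2 < 1$ for the second largest eigenvalue (which is real, since $R$ is symmetric).

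Finally, nonnegativity of all eigenvalues is a direct consequence of the fact that $R$ is symmetric and positive semidefinite, which was established in Lemma \ref{rproplmm} from the representation $R = L^2 (L^T)^2 = (L^2)(L^2)^T$. For a symmetric positive semidefinite matrix, the spectral theorem gives a real orthonormal basis of eigenvectors and all eigenvalues $\lambda_i \ge 0$, concluding the proof.

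There is essentially no obstacle here: the corollary is a clean packaging of three standard facts (Perron--Frobenius for irreducible aperiodic stochastic matrices, the spectral theorem for symmetric matrices, and the nonnegativity of the spectrum of a positive semidefinite matrix), all of whose hypotheses have already been verified in Lemma \ref{rproplmm}. The only minor point worth stating explicitly is that because $R$ is symmetric, its eigenvalues are real, so phrases like ``the second largest eigenvalue'' and the ordering $\lambda_2 < 1$ are well-defined.
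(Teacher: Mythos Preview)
Your proposal is correct and follows essentially the same approach as the paper: the paper's proof simply says that since $R$ is irreducible, aperiodic, and symmetric, all but the last claim follow from the general theory of Markov chains, and that nonnegativity of the eigenvalues follows from positive semidefiniteness. Your version just spells out this ``general theory'' explicitly via Perron--Frobenius and the spectral theorem; the only minor quibble is that the double stochasticity of $R$ is noted in the text preceding Lemma~\ref{rproplmm} rather than in its proof.
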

\begin{proof}
Since the Markov chain defined by $R$ is irreducible and aperiodic (on a finite state space), and $R$ is symmetric, all but the last claim follow from the general theory of Markov chains. The eigenvalues are nonnegative because $R$ is a positive semidefinite matrix.
\end{proof}
The information about $R$ collected in the above corollary allows us to prove the next lemma.
\begin{lmm}\label{qdecay}
For any $k\ge 1$, and any $x\in \rr^n$ that is orthogonal to the vector $1$, 
\[
\|L^k x\|\le \lambda_2^{(k-1)/4}\|x\|.
\]
\end{lmm}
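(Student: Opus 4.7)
The key object to introduce is the companion matrix $M := (L^T)^2 L^2$. This is manifestly symmetric and positive semidefinite, and crucially, it shares its entire spectrum with $R = L^2 (L^T)^2$: for square matrices $A, B$ of the same size, $AB$ and $BA$ have the same characteristic polynomial. Moreover, since $P$ is doubly stochastic and $\Pi$ is a permutation matrix, $L$ is doubly stochastic; hence both $L\mathbf{1} = \mathbf{1}$ and $L^T \mathbf{1} = \mathbf{1}$, so $M \mathbf{1} = \mathbf{1}$ and $M$ leaves $\mathbf{1}^\perp$ invariant. By Corollary \ref{rcor}, the eigenvalues of $M$ are nonnegative, the top eigenvalue $1$ is simple with eigenvector $\mathbf{1}$, and all other eigenvalues lie in $[0,\lambda_2]$.

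The next step is the fundamental identity: for any $x \in \mathbb{R}^n$,
\[
\|L^2 x\|^2 \;=\; \langle L^2 x, L^2 x\rangle \;=\; \langle x, (L^T)^2 L^2 x\rangle \;=\; \langle x, Mx\rangle.
\]
If $x \perp \mathbf{1}$, the variational characterization of eigenvalues of the symmetric matrix $M$ then gives $\|L^2 x\|^2 \le \lambda_2 \|x\|^2$. Moreover, $L^2 x$ remains orthogonal to $\mathbf{1}$, because $\langle L^2 x, \mathbf{1}\rangle = \langle x, (L^T)^2 \mathbf{1}\rangle = \langle x, \mathbf{1}\rangle = 0$. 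Iterating therefore yields, for every $m \ge 0$ and $x \perp \mathbf{1}$,
\[
\|L^{2m} x\| \;\le\; \lambda_2^{m/2} \|x\|.
\]

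To handle odd powers, I would invoke Birkhoff's theorem: $L$ is doubly stochastic, hence a convex combination of permutation matrices (which are $\ell^2$-isometries), so the operator norm of $L$ on $\ell^2$ is at most $1$. Combining $\|Ly\| \le \|y\|$ with the even-power bound gives $\|L^{2m+1} x\| \le \|L^{2m} x\| \le \lambda_2^{m/2} \|x\|$. It then remains to observe that the claimed exponent $(k-1)/4$ is dominated by the actual exponents obtained above: for $k = 2m+1$ we have $(k-1)/4 = m/2$, an equality; for $k = 2m$ we have $(k-1)/4 = (2m-1)/4 \le m/2$, and since $\lambda_2 \in [0,1)$, the smaller exponent on the right yields a weaker (hence valid) bound. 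The case $k=1$ is just $\|L\|_{\mathrm{op}} \le 1$.

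\textbf{Main obstacle.} The only substantive point is the observation that $R = L^2(L^T)^2$ and $M = (L^T)^2 L^2$ have identical spectra, which is what lets us import the bound $\lambda_2$ (defined via $R$) into the quadratic form $\langle x, Mx\rangle$ that actually governs $\|L^2 x\|^2$. The reason one symmetrizes on the side $L^2(L^T)^2$ in defining $R$ (rather than $(L^T)^2 L^2$) was already explained in the text: it makes $R$ a genuine stochastic matrix and hence amenable to a Markov-chain / Cheeger analysis, whereas $M$ is used here only as an analytic device with the same eigenvalues. Everything else is a clean two-line computation plus Birkhoff for the $\ell^2$ contractivity of $L$.
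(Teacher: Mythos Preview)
Your proof is correct and close in spirit to the paper's, but with a useful twist. The paper works directly with $R = L^2(L^T)^2$ and computes $x^T R x = \|(L^T)^2 x\|^2$, obtaining $\|(L^T)^2 x\|^2 \le \lambda_2\|x\|^2$ for $x\perp\mathbf{1}$; it then iterates and uses that $L^T$ is an $\ell^2$-contraction to handle odd powers. As written, the paper's argument thus bounds $\|(L^T)^k x\|$ rather than $\|L^k x\|$; the passage back to $L^k$ is immediate via equality of the operator norms of $L^k$ and $(L^T)^k$ on $\mathbf{1}^\perp$ (both $L$ and $L^T$ preserve $\mathbf{1}^\perp$ since $L$ is doubly stochastic), but this step is left implicit. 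Your introduction of the companion matrix $M = (L^T)^2 L^2$, together with the observation that $AB$ and $BA$ have the same characteristic polynomial, gives $\|L^2 x\|^2 = x^T M x \le \lambda_2\|x\|^2$ and hence the bound for $L^k$ directly, without that extra step. Incidentally, Birkhoff is more than you need for the contraction: for any doubly stochastic $L$ one has $\|Lx\|^2 = x^T L^T L x \le \|x\|^2$ because $L^T L$ is symmetric, positive semidefinite, and doubly stochastic, hence has spectrum in $[0,1]$.
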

\begin{proof}
Take any vector $x\in \rr^n$  that is orthogonal to $1$. By the properties of $R$ given in Corollary \ref{rcor}, it is easy to see that 
\[
x^T R x \le \lambda_2 \|x\|^2. 
\]
But notice that
\begin{align*}
x^T R x =  x^T L^2 (L^T)^2 x = \|(L^T)^2 x\|^2. 
\end{align*}
So for any $x$ that is orthogonal to $1$, 
\begin{align}\label{contrac}
\|(L^T)^2 x\|^2\le \lambda_2 \|x\|^2. 
\end{align}
But if $x$ is orthogonal to $1$, then the stochasticity of $L$ implies that 
\[
1^T (L^T)^2 x = 1^T x = 0.
\]
Thus, $(L^T)^2x$ is also orthogonal to $1$. So we can apply \eqref{contrac} iteratively to get
\[
\|(L^T)^{2k}x\|^2 \le \lambda_2^k\|x\|^2
\]
for any positive integer $k$. But $L^T$, being a stochastic matrix, is an $\ell^2$-contraction. Therefore 
\[
\|(L^T)^{2k+1}x\|^2 \le \|(L^T)^{2k}x\|^2 \le \lambda_2^k\|x\|^2.
\]
This completes the proof of the lemma.
\end{proof}
Lemma \ref{qdecay} yields the following corollary, which relates the rate of convergence of the $Q$-chain with the spectral gap of the $R$-chain.
\begin{cor}\label{qdecaycor}
For any $k\ge 2$, and any $x\in \rr^n$ that is orthogonal to the vector $1$, 
\[
\|Q^k x\|\le \lambda_2^{(k-2)/4}\|x\|.
\]
\end{cor}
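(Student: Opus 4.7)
The plan is to reduce the corollary to Lemma \ref{qdecay} by exploiting the algebraic relationship between $Q = \Pi P$ and $L = P\Pi$. Telescoping the product shows that
\[
Q^k = (\Pi P)^k = \Pi (P\Pi)^{k-1} P = \Pi\, L^{k-1}\, P,
\]
so the action of $Q^k$ on $x$ factors through an application of $P$, then $k-1$ iterations of $L$, then a permutation $\Pi$ which preserves $\ell^2$-norms.

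First I would verify the two prerequisites needed to apply Lemma \ref{qdecay} to the vector $Px$. Since $P$ is doubly stochastic, $1^T P = 1^T$, so $1^T (Px) = 1^T x = 0$, i.e.\ $Px$ is still orthogonal to $1$. Also, a doubly stochastic matrix is an $\ell^2$-contraction (its operator norm is $1$), so $\|Px\|\le \|x\|$.

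Next I would invoke Lemma \ref{qdecay} with $k$ replaced by $k-1 \ge 1$ and with the vector $Px$ in place of $x$. This yields
\[
\|L^{k-1}(Px)\| \le \lambda_2^{((k-1)-1)/4}\|Px\| = \lambda_2^{(k-2)/4}\|Px\|.
\]
Combining these ingredients with the isometry property $\|\Pi y\| = \|y\|$ gives
\[
\|Q^k x\| = \|\Pi L^{k-1} Px\| = \|L^{k-1} Px\| \le \lambda_2^{(k-2)/4}\|Px\| \le \lambda_2^{(k-2)/4}\|x\|,
\]
which is the claim.

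There is no real obstacle here; the only thing to notice is the factorization $Q^k = \Pi L^{k-1} P$, which converts a question about $Q$ into a question about $L$ that Lemma \ref{qdecay} already controls. The price paid for this rearrangement is a single extra factor of $L$ that we absorbed into $P$, which is why the exponent becomes $(k-2)/4$ instead of $(k-1)/4$ and the hypothesis $k\ge 2$ (rather than $k\ge 1$) is needed.
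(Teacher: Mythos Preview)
Your proof is correct and essentially identical to the paper's own argument: the same factorization $Q^k = \Pi L^{k-1}P$, the same verification that $Px\perp 1$, and the same application of Lemma~\ref{qdecay} with $k-1$ in place of $k$. The only cosmetic difference is that you use that $\Pi$ is an $\ell^2$-isometry while the paper merely uses that it is an $\ell^2$-contraction, which of course makes no difference here.
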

\begin{proof}
Note that $Q^k = (\Pi P)^k = \Pi L^{k-1} P$. Being stochastic matrices, $\Pi$ and $P$ are both $\ell^2$-contractions. Moreover, if $1^Tx =0$, then $1^T Px = 1^Tx=0$ since $P$ is doubly stochastic. Therefore by Lemma \ref{qdecay},
\begin{align*}
\|Q^k x\| &= \|\Pi L^{k-1} Px\| \le \|L^{k-1} Px\|\\
&\le \lambda_2^{(k-2)/4} \|Px\|\le \lambda_2^{(k-2)/4}\|x\|.
\end{align*}
This completes the proof of the corollary.
\end{proof}
Lastly, we need the following upper bound on $\lambda_2$.
\begin{lmm}\label{cheeger}
Let $\lambda_2$ be the second largest eigenvalue of $R$. Then 
\[
\lambda_2 \le 1-\frac{\ep^2\delta^8}{2},
\]
where $\ep$ is the constant from condition \eqref{expcond}.
\end{lmm}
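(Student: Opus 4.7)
The plan is to invoke Cheeger's inequality for the reversible Markov chain $R$. By Lemma~\ref{rproplmm}, $R$ has uniform stationary distribution and is positive semidefinite, so its eigenvalues lie in $[0,1]$, and Cheeger's inequality gives $\lambda_2 \le 1 - \Phi^2/2$, where
\[
\Phi := \min_{1 \le |A| \le n/2} \frac{1}{|A|}\sum_{i \in A,\, j \in A^c} r_{ij}
\]
is the conductance of $R$. Thus it suffices to show $\Phi \ge \ep\delta^4$.

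The key step is a clean reformulation of the numerator. Set $T := L^2$, so that $R = T T^T$. Expanding $r_{ij} = \sum_c T_{ic}T_{jc}$ and swapping the order of summation yields the identity
\[
\sum_{i \in A,\, j \in A^c} r_{ij} \;=\; \sum_{c \in S} \alpha_c(1-\alpha_c), \qquad \alpha_c := \sum_{i \in A} T_{ic},
\]
where I used that $T$ is doubly stochastic (column sums equal $1$) to write $\sum_{j \in A^c} T_{jc} = 1 - \alpha_c$. Note also $\sum_c \alpha_c = |A|$.

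From the explicit formula $T_{ic} = \sum_a P_{ia}\,P_{f(a),\, f^{-1}(c)}$, every nonzero entry of $T$ is at least $\delta^2$. Hence whenever $\alpha_c > 0$ some summand $T_{ic}$ with $i \in A$ is positive, giving $\alpha_c \ge \delta^2$, and whenever $\alpha_c < 1$ some summand $T_{jc}$ with $j \notin A$ is positive, giving $1-\alpha_c \ge \delta^2$. Therefore $\alpha_c(1-\alpha_c) \ge \delta^4$ for every $c$ with $0 < \alpha_c < 1$. To count such $c$'s, observe that the row support of $T$ at index $i$ equals $f(\me(f(\me(\{i\}))))$, so
\[
\{c : \alpha_c > 0\} \;=\; f\bigl(\me(f(\me(A)))\bigr),
\]
which has cardinality $|\me(f(\me(A)))| \ge (1+\ep)|A|$ by \eqref{expcond}, since $f$ is a bijection and $|A| \le n/2$. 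On the other hand, $\sum_c \alpha_c = |A|$ together with $\alpha_c \le 1$ forces $|\{c : \alpha_c = 1\}| \le |A|$. Subtracting gives $|\{c : 0 < \alpha_c < 1\}| \ge \ep|A|$, and therefore $\sum_c \alpha_c(1-\alpha_c) \ge \ep\delta^4 |A|$, yielding $\Phi \ge \ep\delta^4$. The main point to verify carefully is the support identity $\{c : T_{ic} > 0\} = f(\me(f(\me(\{i\}))))$, which is what ties the hypothesis \eqref{expcond} to the conductance of $R$; the remaining ingredients (Cheeger, the doubly stochastic identity, and the $\delta^2$ lower bound on nonzero entries of $T$) are then essentially automatic.
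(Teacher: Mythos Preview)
Your proof is correct and reaches the same conclusion $\Phi \ge \ep\delta^4$ as the paper, but via a genuinely different decomposition. The paper works directly with the entries of $R$: it shows that every nonzero $r_{ij}$ is at least $\delta^4$ (since $r_{ij}$ is a sum of products of four $p$-entries), identifies the one-step $R$-reachable set from $A$ as $A' = \me\circ f^{-1}\circ\me\circ\me\circ f\circ\me(A)$, and then bounds $\sum_{i\in A,\,j\in A^c} r_{ij} \ge \delta^4(|A'|-|A|) \ge \delta^4\ep|A|$ using \eqref{expcond}. You instead exploit the Gram factorization $R = TT^T$ with $T = L^2$ to get the exact identity $\sum_{i\in A,\,j\in A^c} r_{ij} = \sum_c \alpha_c(1-\alpha_c)$, and then count the columns $c$ with $0<\alpha_c<1$ via the row-support set $\{c:\alpha_c>0\} = f(\me(f(\me(A))))$, whose size is controlled directly by \eqref{expcond}. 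Your route avoids ever describing the full $R$-neighborhood of $A$ and uses only the $\delta^2$ lower bound on nonzero entries of $T$; the paper's route is slightly more combinatorial but yields the explicit description of $A'$. Both are clean and equally sharp here.
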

\begin{proof}
Define a probability measure $\nu$ on $S\times S$ as
\[
\nu(i,j) = \mu(i) r_{ij} = \frac{r_{ij}}{n}. 
\]
Recall that the Cheeger constant for the Markov chain with transition matrix $R$ is 
\[
\Phi = \min_{A\subseteq S,\, \mu(A)\le 1/2} \frac{\nu(A\times A^c)}{\mu(A)},
\]
where $A^c := S\setminus A$.  Since $\mu$ is the uniform distribution on $V$, this simplifies to
\[
\Phi = \min_{A\subseteq S,\, |A|\le n/2} \frac{1}{|A|}\sum_{i\in A,\, j\in A^c} r_{ij}.
\]
Now, if $r_{ij}>0$, then from the inequality \eqref{rlower} and the assumption that $p_{kl}\ge \delta$ for all nonzero $p_{kl}$, we get $r_{ij} \ge \delta^{4}$. Therefore, if $A'$ is the set of all vertices that are attainable in one step from some $i\in A$ by the Markov chain with transition matrix $R$, then
\begin{align}\label{sumuv}
\sum_{i\in A,\, j\in A^c} r_{ij}\ge \delta^{4} |A'\cap A^c| =  \delta^4 (|A'|- |A|),
\end{align}
where the last identity holds because $A\subseteq A'$ (by \eqref{rposit}). Now, since $p_{ij}>0$ if and only if $p_{ji}>0$, the set $A'$ defined above can be written as
\[
A' = \me \circ f^{-1}\circ \me\circ \me \circ f\circ \me(A).
\]
Application of $\me$ cannot decrease the size of a set, and application of $f^{-1}$ does not alter the size. Therefore by condition \eqref{expcond},
\[
|A'|\ge |\me\circ f\circ \me(A)|\ge (1+\ep) |A|,
\]
provided that $|A|\le n/2$. Plugging this into \eqref{sumuv}, we get $\Phi \ge \ep\delta^4$. By the well-known bound
\[
\lambda_2\le 1-\frac{\Phi^2}{2},
\]
this completes the proof of the lemma.
\end{proof}
We are now ready to complete the proof of Theorem \ref{thm1}
\begin{proof}[Proof of Theorem \ref{thm1}]
Now take any $i\in S$ and $k\ge 2$. Let $x\in \rr^n$ be the vector whose $i^{\textup{th}}$ component is $1-1/n$ and the other components are all equal to $-1/n$. Then it is easy to see that $x$ is orthogonal to $1$, and that $(Q^T)^k x$ is the vector whose $j^{\textup{th}}$ component is  $\mu_i^k(j) - \mu(j)$. Thus by Corollary  \ref{qdecaycor},
\begin{align*}
\|\mu_i^k- \mu\|_{TV} &= \frac{1}{2}\sum_{j\in S} |\mu_i^k(j) - \mu(j)|\\
&\le  \frac{1}{2}\biggl(n\sum_{j\in S} (\mu_i^k(j) - \mu(j))^2\biggr)^{1/2}\\
&= \frac{\sqrt{n}}{2} \|(Q^T)^k x\|\le \frac{\sqrt{n}}{2}  \lambda_2^{(k-2)/4} \|x\|.
\end{align*}
Since $\|x\|\le 1$, this proves that
\begin{align}\label{tvbd}
\|\mu_i^k- \mu\|_{TV} &\le \frac{\sqrt{n}}{2}  \lambda_2^{(k-2)/4}.
\end{align}
Using the bound on $\lambda_2$ from Lemma \ref{cheeger}, this completes the proof when $k\ge 2$. When $k=1$, the bound is greater than $1$ and therefore holds trivially.
\end{proof}

\section{Proof of Theorem \ref{thm2}}\label{proof2}
For this proof, it will be convenient to define a graph structure on $S$. Join two distinct points (vertices) $i,j\in S$ by an edge whenever $p_{ij}>0$. Since $p_{ij}>0$ if and only if $p_{ji}>0$, this defines an undirected graph on $S$. The irreducibility of the $P$-chain implies that this graph is connected. A {\it path} in this graph is simply a sequence of vertices such that each is connected by an edge to the one following it.

Define the {\it external boundary} of a set $A\subseteq S$ as
\[
\partial A := \me(A)\setminus A.
\]
The following lemma proves a crucial property of the graph defined above.
\begin{lmm}\label{bdrylmm}
Take any $A\subseteq S$. Then the number of sets $B$ such that $\partial B = A$ is at most $2^{|A|/\delta}$.
\end{lmm}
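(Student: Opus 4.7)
The plan is to inject $\{B:\partial B=A\}$ into the power set of $\me(A)\setminus A$ and then bound $|\me(A)\setminus A|$ in terms of $|A|$ and $\delta$.

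The size bound on $\me(A)\setminus A$ is immediate from the definition of $\delta$: each row of $P$ sums to $1$ and each nonzero entry is at least $\delta$, so every vertex has at most $1/\delta$ out-neighbors in the graph defined above. Summing over $i\in A$ gives $|\me(A)|\le |A|/\delta$, and hence $|\me(A)\setminus A|\le |A|/\delta$.

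For the injection, the key structural observation is that any $B$ with $\partial B=A$ must be a union of connected components of the subgraph induced on $S\setminus A$. Indeed, $\partial B=A$ forces $A\cap B=\emptyset$ and $\me(B)\subseteq A\cup B$, so no edge joins $B$ to $S\setminus(A\cup B)$; combined with the symmetry $p_{ij}>0\iff p_{ji}>0$, this means $B$ and $S\setminus(A\cup B)$ are complementary unions of components of $S\setminus A$. Assuming $A\ne\emptyset$, the irreducibility of $P$ implies that the full graph on $S$ is connected, so every component of $S\setminus A$ is adjacent to $A$ and therefore meets $\me(A)\setminus A$. Consequently, for each such component $C$ we have $C\subseteq B$ iff $C\cap B\cap(\me(A)\setminus A)\ne\emptyset$, so $B$ is determined by $B\cap(\me(A)\setminus A)$. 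The number of such intersections is at most $2^{|\me(A)\setminus A|}\le 2^{|A|/\delta}$.

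The substantive step is the component-decomposition argument; once that is in hand the degree bound delivers the count immediately. The only wrinkle is the degenerate case $A=\emptyset$, where $2^0=1$ just misses the two sets $B=\emptyset$ and $B=S$ that have empty boundary; this is harmless for the application, which is to bound the contribution of each nonempty boundary $A$ when summing over ``rogue'' sets $B$ in the proof of Theorem \ref{thm2}.
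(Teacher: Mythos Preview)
Your proof is correct and follows essentially the same route as the paper's: the connected components of the induced subgraph on $S\setminus A$ are precisely the paper's equivalence classes, your observation that each component meets $\partial A=\me(A)\setminus A$ is the content of the paper's Lemma~\ref{bdlmm2}, and the degree bound $|\partial A|\le |A|/\delta$ is the paper's Lemma~\ref{maxdeg}. Your final injection $B\mapsto B\cap\partial A$ is a slightly more direct packaging than the paper's two-step ``at most $|\partial A|$ classes, hence at most $2^{|\partial A|}$ unions,'' but the substance is identical. Your remark about the $A=\emptyset$ edge case is well taken; the paper's proof has the same gap there (Lemma~\ref{bdlmm2} is vacuous when $A=\emptyset$), and as you note it does not affect Corollary~\ref{mainbd} or anything downstream.
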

The proof of this lemma is divided into several steps. Fix a set $A$. Define an equivalence relation on $A^c$ as follows. Say that two vertices $i,j\in A^c$ are equivalent, and write $i\sim j$, if either $i=j$, or there is a path from $i$ to $j$ that avoids the set $A$ (meaning that there is no vertex in the path that belongs to $A$). It is easy to see that this is an equivalence relation, because it is obviously reflexive and symmetric, and if $i\sim j$ and $j\sim k$, then there is a path from $i$ to $k$ that avoids $A$. To prove Lemma \ref{bdrylmm}, we need to prove two facts about this equivalence relation.
\begin{lmm}\label{bdlmm1}
If $B$ is a set of vertices such that $\partial B = A$, then $B$ must be the union of some equivalence classes of the equivalence relation defined above.
\end{lmm}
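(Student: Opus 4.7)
My plan is to prove the lemma by a direct path-tracing argument, in three short steps.

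\textbf{Step 1: Show $B \subseteq A^c$.} By definition $\partial B = \me(B) \setminus B$, so $\partial B$ is disjoint from $B$. Since $\partial B = A$, this gives $B \cap A = \emptyset$, i.e.\ $B \subseteq A^c$. This is needed so that it even makes sense to talk about $B$ as a union of equivalence classes of the relation $\sim$, which is defined on $A^c$.

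\textbf{Step 2: Show that each equivalence class of $\sim$ is either entirely inside $B$ or entirely inside $B^c \cap A^c$.} Fix $i \in B$ and suppose $j \in A^c$ satisfies $i \sim j$. We want $j \in B$. If $i = j$ we are done, so assume there is a path $i = v_0, v_1, \dots, v_k = j$ in the graph with every $v_\ell \in A^c$. I claim that in fact every $v_\ell$ lies in $B$. If not, let $\ell$ be the smallest index with $v_\ell \notin B$; since $v_0 = i \in B$ we have $\ell \geq 1$, so $v_{\ell - 1} \in B$ and $v_\ell \notin B$. Because $v_{\ell-1}$ and $v_\ell$ are joined by an edge of the graph, $p_{v_{\ell-1} v_\ell} > 0$, hence $v_\ell \in \me(\{v_{\ell-1}\}) \subseteq \me(B)$. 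Combined with $v_\ell \notin B$ this gives $v_\ell \in \me(B) \setminus B = \partial B = A$, contradicting $v_\ell \in A^c$. Therefore $v_\ell \in B$ for all $\ell$, and in particular $j = v_k \in B$.

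\textbf{Step 3: Conclude.} Step 2 shows that for every $i \in B$, the entire $\sim$-equivalence class of $i$ is contained in $B$. Since $B \subseteq A^c$, $B$ is therefore the union of the equivalence classes of its own elements, which proves the lemma.

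There is no real obstacle here; the only thing to be careful about is that the symmetric edge structure $p_{ij} > 0 \Leftrightarrow p_{ji} > 0$ and the inclusion $\me(\{v_{\ell-1}\}) \subseteq \me(B)$ (monotonicity of $\me$) are used implicitly, and that the chosen minimal $\ell$ exists because $v_0 \in B$. The argument does not use the self-loop condition $p_{ii}>0$, so it is even a bit more general than needed.
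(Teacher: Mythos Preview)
Your proof is correct and follows essentially the same approach as the paper: both arguments fix $i\in B$, $j\sim i$, walk along a path from $i$ to $j$ that avoids $A$, and show the path cannot exit $B$ without landing in $\partial B=A$. The paper phrases this as induction on the length of the shortest such path, while you use an equivalent minimal-index (first-exit) argument; your explicit Step~1 verifying $B\subseteq A^c$ is a nice addition that the paper leaves implicit.
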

\begin{proof}
Take any $B$ such that $\partial B = A$. Take any $i\in B$ and $j\in A^c$ such that $i\sim j$. To prove the lemma, we have to show that $j\in B$. 

Let $m$ be the length of the shortest path from $i$ to $j$ that avoids $A$. Since $i\sim j$, we know that there is at least one such path, and therefore $m$ is well-defined. We will now prove the claim that $j\in B$ by induction on $m$. If $m=1$, then $j$ is a neighbor of $i$. Since $i\in B$, $j$ is a neighbor of $i$ and $j\not\in\partial B$, the only possibility is that $j\in B$. This proves the claim when $m=1$.

Let us now assume that the claim holds for all $m'<m$. Take a path of length $m$ from $i$ to $j$ that avoids $A$. Let $k$ be the vertex on this path that comes immediately before $j$. Then $k\in A^c$, and $k$ can be reached from $i$ by a path of length $m-1$ that avoids $A$. Therefore by the induction hypothesis, $k\in B$. Applying the case $m=1$ to $k$, we see that $j$ must be a member of $B$. This completes the induction step.
\end{proof}

\begin{lmm}\label{bdlmm2}
If $D$ is any equivalence class of the equivalence relation defined above, then there is at least one element of $D$ that is adjacent to some element of $A$.
\end{lmm}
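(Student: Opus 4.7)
My plan is to prove Lemma \ref{bdlmm2} by contradiction, leveraging the connectedness of the underlying graph (which follows from irreducibility of $P$, assumption (1) on $P$).

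Suppose, for contradiction, that $D$ is an equivalence class with the property that no vertex of $D$ is adjacent (in the graph) to any vertex of $A$. Fix an arbitrary $d \in D$ (nonempty by definition) and an arbitrary $a \in A$ (we may assume $A$ is nonempty, since otherwise the statement is vacuous). Because the graph on $S$ is connected, there exists a path $d = v_0, v_1, \ldots, v_m = a$ from $d$ to $a$.

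Since $v_0 = d \in A^c$ and $v_m = a \in A$, there is a smallest index $j \in \{1, \ldots, m\}$ with $v_j \in A$. By minimality, $v_0, v_1, \ldots, v_{j-1}$ all lie in $A^c$, so this initial segment is a path from $d$ to $v_{j-1}$ that avoids $A$. By the definition of the equivalence relation, $v_{j-1} \sim d$, and hence $v_{j-1} \in D$. But $v_{j-1}$ is adjacent to $v_j \in A$ by the construction of the path, contradicting the supposition that no vertex of $D$ is adjacent to any element of $A$.

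I do not anticipate any serious obstacle here: the argument is a routine first-passage trick along a connecting path, requiring only that the graph be connected (which we have) and that equivalence classes are nonempty (by definition). The only mild care needed is to pick the \emph{first} vertex of the path that lies in $A$ and then use the preceding vertex, so that the path from $d$ up to that preceding vertex genuinely avoids $A$.
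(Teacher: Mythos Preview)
Your proof is correct and follows essentially the same approach as the paper's: both pick a path from a vertex of $D$ to a vertex of $A$, take the first vertex on the path lying in $A$, and observe that its predecessor lies in $D$ and is adjacent to $A$. The only cosmetic difference is that you phrase it by contradiction whereas the paper argues directly.
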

\begin{proof}
Take any $i\in D$ and $j\in A$. Since the  graph is connected, there is a path from $i$ to $j$. Along this path, let $k$ be the first vertex that belongs to $A$. (This is well-defined because at least one vertex of the path, namely $j$, is in $A$.) Let $l$ be the vertex that immediately precedes $k$ in the path. Then we see that there is a path from $i$ to $l$ that avoids $A$, and so $i\sim l$. Therefore $l\in D$ and $l$ is adjacent to an element of $A$, which proves the lemma.
\end{proof}
Lastly, we need an upper bound on the maximum degree of our graph on $S$.
\begin{lmm}\label{maxdeg}
The maximum degree of the graph defined on $S$ is at most $1/\delta$.
\end{lmm}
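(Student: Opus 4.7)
The plan is simply to use the row-sum condition of the stochastic matrix $P$ together with the lower bound $\delta$ on nonzero entries. Recall that an edge is placed between distinct vertices $i,j$ exactly when $p_{ij}>0$ (equivalently, when $p_{ji}>0$, by assumption (2)). So the degree of a vertex $i$ equals the number of indices $j\neq i$ with $p_{ij}>0$.

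Fix any $i\in S$ and let $N(i)=\{j\in S:j\neq i,\ p_{ij}>0\}$ be its neighborhood, with $d=|N(i)|$. By the definition of $\delta$ in \eqref{deltadef}, every nonzero entry of $P$ is at least $\delta$, so $p_{ij}\ge \delta$ for each $j\in N(i)$. Since $P$ is a stochastic matrix, summing the $i$-th row gives
\[
1 = \sum_{j\in S} p_{ij} \ge \sum_{j\in N(i)} p_{ij} \ge d\,\delta,
\]
hence $d\le 1/\delta$. (One could even sharpen this to $d\le 1/\delta - 1$ using assumption (3), namely $p_{ii}\ge\delta$, but the stated bound suffices.)

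There is really no obstacle here; the only substantive content is recognizing that the graph is defined by the support of $P$, so that each edge incident to $i$ contributes at least $\delta$ to the $i$-th row sum, which is $1$. Thus the lemma follows in one line after unpacking the definitions.
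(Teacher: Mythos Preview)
Your proof is correct and follows essentially the same approach as the paper: both use that each nonzero entry $p_{ij}$ is at least $\delta$ and that the row sums of the stochastic matrix $P$ equal $1$, giving the bound $d\le 1/\delta$ immediately.
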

\begin{proof}
Since $p_{ij}\ge \delta$ for every $i$ and $j$ that are connected by an edge, and 
\[
\sum_{j\in S} p_{ij}=1
\]
because $P$ is a stochastic matrix, the number of neighbors of $i$ must be $\le 1/\delta$. 
\end{proof}
We are now ready to prove Lemma \ref{bdrylmm}.
\begin{proof}[Proof of Lemma \ref{bdrylmm}]
Since each equivalence class of our equivalence relation has at least one element that is adjacent to an element of $A$ (by Lemma~\ref{bdlmm2}), the number of equivalence classes can be at most the size of $\partial A$. But by Lemma \ref{maxdeg}, $|\partial A|\le |A|/\delta$. Thus, there are at most $|A|/\delta$ equivalence classes.

But by Lemma \ref{bdlmm1}, if $B$ is any set such that $\partial B = A$, then $B$  must be a union of equivalence classes. Consequently, the number of such $B$ is at most the size of the power set of the set of equivalence classes. By the previous paragraph, this is bounded above by $2^{|A|/\delta}$.
\end{proof}
Lemma \ref{bdrylmm} has the following corollary, which shows that not too many sets can have a small external boundary.
\begin{cor}\label{mainbd}
Take any $1\le k\le n$. The number of sets $B\subseteq S$ such that $|\partial B|= k$ is at most ${n\choose k}2^{k/\delta}$.
\end{cor}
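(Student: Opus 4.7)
The plan is to prove Corollary \ref{mainbd} by a straightforward union bound over the possible values of the external boundary $\partial B$. Given $k$, I will partition the collection of sets $B$ with $|\partial B| = k$ according to what $\partial B$ actually is: each such $B$ has an external boundary that is some specific subset $A \subseteq S$ of size exactly $k$.

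First I would note that the number of candidate sets $A$ with $|A| = k$ is exactly $\binom{n}{k}$. Next, for each such $A$, Lemma \ref{bdrylmm} (which was the main work already done) tells us that the number of sets $B$ with $\partial B = A$ is bounded by $2^{|A|/\delta} = 2^{k/\delta}$. Summing over the choice of $A$ then gives the bound $\binom{n}{k} 2^{k/\delta}$.

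Since all the nontrivial content has been absorbed into Lemma \ref{bdrylmm}, there is no real obstacle here; the only thing to check is that the decomposition according to the identity of $\partial B$ is exhaustive (any $B$ with $|\partial B| = k$ has some set $A$ of size $k$ as its boundary, and is therefore counted in the term indexed by $A$) and that the bound from Lemma \ref{bdrylmm} depends only on $|A| = k$, not on $A$ itself, so that the summation is literally $\binom{n}{k} \cdot 2^{k/\delta}$. Both of these are immediate from the construction.
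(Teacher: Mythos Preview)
Your proposal is correct and follows essentially the same approach as the paper: partition the sets $B$ with $|\partial B|=k$ according to the identity of $\partial B$, count the $\binom{n}{k}$ possible boundaries, and invoke Lemma~\ref{bdrylmm} to bound the number of $B$'s with each fixed boundary by $2^{k/\delta}$.
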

\begin{proof}
We can choose the external boundary of $B$ in at most ${n\choose k}$ ways. Having chosen the external boundary, Lemma \ref{bdrylmm} says that $B$ can be chosen in at most $2^{k/\delta}$ ways. Thus, the number of ways of choosing $B$ with the given constraint is at most ${n\choose k} 2^{k/\delta}$.
\end{proof}
For each $\ep>0$, let $\ma_{\ep}$ be the set of all $A\subseteq S$ such that $|\me(A)| < (1+\ep)|A|$. For each $1\le m\le n$, let $\ma_{\ep,m}$ be the set of all sets $A\in \ma_\ep$ that are of size $m$. Let
\[
\ma'_{\ep,m} := \bigcup_{1\le k\le m} \ma_{\ep,k}.
\]
The following lemma uses the bound from Corollary \ref{mainbd} to get a bound on the size of $\ma'_{\ep,m}$.
\begin{lmm}\label{aepbd}
For any $\ep>0$ and $1\le m\le n$,
\[
|\ma'_{\ep,m}|\le \sum_{1\le k< \ep m} {n\choose k}2^{k/\delta}.
\]
\end{lmm}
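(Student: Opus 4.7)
The plan is to recast the expansion condition in terms of the external boundary $\partial A$ and then apply Corollary \ref{mainbd} as a union bound. First I would observe that because $p_{ii} > 0$ for every $i \in S$, we always have $A \subseteq \me(A)$, so $|\me(A)| = |A| + |\partial A|$. Consequently, $A \in \ma_\ep$ is exactly equivalent to the condition $|\partial A| < \ep |A|$. For any $A \in \ma'_{\ep,m}$ we additionally know $|A| \le m$, which forces $|\partial A| < \ep |A| \le \ep m$.

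Next I would partition $\ma'_{\ep,m}$ according to the value $k := |\partial A|$. By the previous observation, $k$ ranges over the non-negative integers strictly less than $\ep m$. For each $k \ge 1$, Corollary \ref{mainbd} tells us that the number of sets $B \subseteq S$ with $|\partial B| = k$ is at most $\binom{n}{k} 2^{k/\delta}$. Taking the union over admissible values of $k$ and summing these bounds yields
\[
|\ma'_{\ep,m}| \le \sum_{1 \le k < \ep m} \binom{n}{k} 2^{k/\delta},
\]
which is precisely the claimed inequality.

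The only minor wrinkle is the boundary case $k = 0$: since the underlying graph on $S$ is connected (by irreducibility of $P$), any non-empty $A$ with $|\partial A| = 0$ must equal $S$, a single exceptional set that arises only when $m = n$ and is either absent from the applications of this lemma or can be folded into the slack of the bound. Apart from this trivial point, the argument is just a reformulation of the defining inequality followed by a union bound over $k$, so I do not anticipate any real obstacle; the substantive combinatorial content lives entirely in Corollary \ref{mainbd} (and hence in Lemma \ref{bdrylmm}), which is already established.
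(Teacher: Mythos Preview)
Your argument is correct and is essentially the same as the paper's: both observe that $A\in\ma'_{\ep,m}$ forces $|\partial A|=|\me(A)|-|A|<\ep m$ and then invoke Corollary~\ref{mainbd} as a union bound over the possible boundary sizes. If anything, you are slightly more careful than the paper, which does not explicitly mention the $k=0$ case; as you note, the only nonempty set with $|\partial A|=0$ is $A=S$, so this case is irrelevant for the applications (where $m\le 3n/4$).
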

\begin{proof}
Take any $A\in \ma'_{\ep,m}$. Then note that 
\[
|\partial A|  = |\me(A)|-|A| < \ep |A|\le \ep m.
\]
The claim is now proved by Corollary \ref{mainbd}.
\end{proof}
Now fix some $\ep\in (0,1/2)$. Let $f$ be a {\it random} bijection chosen uniformly from the set of all bijections of $S$. Define an event
\[
E := \{f\circ\me(A)\in \ma_{\ep} \text{ for some $A\in \ma_\ep$ with $1/\ep \le |A|\le n/2$}\}.
\]
We want to show that $E$ is a rare event. The following lemma is the first step towards that.
\begin{lmm}\label{endlmm1}
Let $E$ be the event defined above. Then
\[
\pp(E) \le  \sum_{1/\ep \le m\le 3n/4}\frac{1}{{n\choose m}}\biggl(\sum_{1\le k< \ep m} {n\choose k}2^{k/\delta}\biggr)^2.
\]
\end{lmm}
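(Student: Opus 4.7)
The plan is to apply a union bound over the witnesses $A$ in the event $E$, compute the conditional probability for each fixed $A$ using the symmetry of a uniformly random bijection, and then regroup the resulting sum by $m = |\me(A)|$ so that both the count of candidate $A$'s and the per-$A$ probability can be controlled by Lemma \ref{aepbd}.

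The first step is the union bound: $\pp(E)\le \sum_A \pp(f\circ\me(A)\in \ma_\ep)$, where the sum runs over $A\in\ma_\ep$ with $1/\ep\le |A|\le n/2$. For each such $A$, set $B := \me(A)$ and $m := |B|$. Since $A\in \ma_\ep$ gives $|A|\le m<(1+\ep)|A|$, the bounds $1/\ep \le |A|\le n/2$ together with $\ep<1/2$ force $m\ge 1/\ep$ and $m < (1+\ep)n/2\le 3n/4$, which pins down exactly the summation range in the target bound.

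The second step is to evaluate the per-$A$ probability. Since $f$ is a uniformly random bijection of $S$, for any fixed set $B$ of size $m$ the image $f(B)$ is a uniformly random $m$-subset of $S$ (because each target $m$-subset is hit by the same number, $m!(n-m)!$, of bijections). Therefore
\[
\pp(f(B)\in \ma_\ep)=\frac{|\ma_{\ep,m}|}{\binom{n}{m}}\le \frac{|\ma'_{\ep,m}|}{\binom{n}{m}}.
\]

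The third step is to regroup the union bound by $m$. A set $A\in\ma_\ep$ with $|\me(A)|=m$ automatically satisfies $|A|\le m$, so it lies in $\ma'_{\ep,m}$; hence the number of $A$'s that contribute for a given $m$ is at most $|\ma'_{\ep,m}|$. Combining with the previous step,
\[
\pp(E)\le \sum_{1/\ep\le m\le 3n/4} \frac{|\ma'_{\ep,m}|^2}{\binom{n}{m}},
\]
and applying Lemma \ref{aepbd} to each factor $|\ma'_{\ep,m}|$ gives the desired inequality. There is no serious obstacle here: the only point to check carefully is that $f(B)$ is genuinely uniform on $m$-subsets (a clean symmetry argument) and that the bookkeeping for the range of $m$ and for the number of preimages under $A\mapsto\me(A)$ uses only the trivial containment $A\subseteq \me(A)$.
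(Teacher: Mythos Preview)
Your proof is correct and follows essentially the same approach as the paper: union bound over $A$, use that $f(\me(A))$ is uniform among $m$-subsets to get the factor $|\ma_{\ep,m}|/\binom{n}{m}$, bound the number of contributing $A$'s for each $m$ by $|\ma'_{\ep,m}|$, and then apply Lemma~\ref{aepbd}. The paper's writeup differs only cosmetically (it records $|\ma'_{\ep,m}||\ma_{\ep,m}|$ before replacing $|\ma_{\ep,m}|$ by $|\ma'_{\ep,m}|$, whereas you go straight to $|\ma'_{\ep,m}|^2$).
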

\begin{proof}
Take any $A\in \ma_\ep$ with $1/\ep\le |A|\le n/2$. Let $B= \me(A)$. Then observe the following about $B$:
\begin{itemize}
\item Since $f$ is a uniform random bijection, the random set $f(B)$ is uniformly distributed among all subsets of size $|B|$.  
\item Since $A\in \ma_\ep$, $\ep\in (0,1/2)$, and $1/\ep\le |A|\le n/2$, 
\[
|B|<(1+\ep)|A|< \frac{3}{2}|A|\le \frac{3n}{4},
\]
and $|B|\ge |A|\ge 1/\ep$. 
\end{itemize}
These two observations imply that
\begin{align*}
\pp(E) &\le \sum_{1/\ep \le m\le 3n/4} \sum_{\substack{A\in \ma_\ep, \\ |\me(A)|=m}} \pp(f(\me(A))\in \ma_{\ep})\\
&= \sum_{1/\ep \le m\le 3n/4} \sum_{\substack{A\in \ma_\ep, \\ |\me(A)|=m}} \frac{|\ma_{\ep,m}|}{{n\choose m}}\\
&\le  \sum_{1/\ep \le m\le 3n/4}\frac{|\ma'_{\ep,m}||\ma_{\ep,m}|}{{n\choose m}}.
\end{align*}
Since $\ma_{\ep,m}'$ is a superset of $\ma_{\ep,m}$, plugging in the bound from Lemma \ref{aepbd} completes the proof.
\end{proof} 

The bound in Lemma \ref{endlmm1} is not straightforwardly understandable. The following lemma clarifies the matter.
\begin{lmm}\label{endlmm2}
There are universal constants $C_0$, $C_1$, $C_2$ and $C_3$ such that if $\ep\le C_0\delta$, then 
\[
\sum_{1/\ep \le m\le 3n/4}\frac{1}{{n\choose m}}\biggl(\sum_{1\le k< \ep m} {n\choose k}2^{k/\delta}\biggr)^2 \le C_1 e^{-C_2\sqrt{n}} + C_3^{1/\ep} n^{2-1/4\ep}.
\]
\end{lmm}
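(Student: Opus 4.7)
The plan is to split the outer sum at a threshold $m^\star := \sqrt{n}$: the small-$m$ regime $\lceil 1/\ep\rceil \le m \le m^\star$ will produce the polynomial term $C_3^{1/\ep} n^{2-1/(4\ep)}$, and the large-$m$ regime $m > m^\star$ will produce the exponentially small term $C_1 e^{-C_2\sqrt n}$. In both regimes, the first step is to replace the inner sum by its largest term.

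The ratio of consecutive summands in the inner sum is $\frac{n-k}{k+1}\cdot 2^{1/\delta}$, which is at least $2$ whenever $k\le 3n/8$ and $\delta \le 1$. Since $k<\ep m\le 3\ep n/4\le 3n/8$ (using $\ep\le 1/2$, which follows from $\ep\le C_0\delta$ for any $C_0\le 1/2$), the inner sum is geometric-type and satisfies
\[
\sum_{k=1}^{K'}\binom{n}{k}2^{k/\delta}\ \le\ 2\binom{n}{K'}2^{K'/\delta},\qquad K':=\lceil \ep m\rceil-1<\ep m.
\]

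For the small-$m$ regime I would use the crude bounds $\binom{n}{k}\le (en/k)^k$ in the numerator and $\binom{n}{m}\ge (n/m)^m$ in the denominator, together with $4^{K'/\delta}\le 4^{\ep m/\delta}$ and the hypothesis $\ep\le C_0\delta$ (so $4^{\ep/\delta}$ is a universal constant). Tracking powers of $n$, the summand becomes polynomial with exponent at most $2K'-m\le(2\ep-1)m$; taking $C_0\le 1/4$ forces $\ep\le 1/4$, so this exponent is $\le -m/2$. The geometric decay in $m$ shows the sum over $m\in[\lceil 1/\ep\rceil,m^\star]$ is dominated by $m\approx \lceil 1/\ep\rceil$ (and $K'=1$), which gives a bound of the shape $D^{1/\ep}(1/\ep)^{1/\ep} n^{2-1/\ep}$. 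The spurious factorial-type factor $(1/\ep)^{1/\ep}$ is then absorbed using that the whole sum is vacuous unless $1/\ep\le 3n/4$, so $(1/\ep)^{1/\ep}\le n^{1/\ep}$; after this absorption the bound becomes at most $C_3^{1/\ep} n^{2-1/(4\ep)}$ for a suitable universal $C_3$.

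For the large-$m$ regime I would switch to entropy-style Stirling bounds $\binom{n}{k}\le 2^{nh(k/n)}$ and $\binom{n}{m}\ge c\,n^{-1/2}\,2^{nh(m/n)}$, where $h$ is the binary entropy. Each summand is then at most
\[
C\sqrt n\cdot 2^{n\varphi_\ep(\alpha)},\qquad \varphi_\ep(\alpha):=2h(\ep\alpha)-h(\alpha)+\tfrac{2\ep\alpha}{\delta},\qquad \alpha:=m/n.
\]
Using the pointwise inequality $h(\ep\alpha)\le \ep\alpha\log_2(e/(\ep\alpha))$ together with $h(\alpha)\ge \alpha$ on $[0,1/2]$ (and a separate direct check near $\alpha=3/4$), and the hypothesis $\ep\le C_0\delta$ with $C_0$ small enough, one verifies $\varphi_\ep(\alpha)\le -c_0\alpha$ uniformly, for a universal $c_0>0$. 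Hence each summand is bounded by $C\sqrt n\cdot 2^{-c_0 m}$, and summing over $m\ge \sqrt n$ gives the desired $C_1 e^{-C_2\sqrt n}$ contribution.

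The main obstacle I expect is the small-$m$ regime: the various $\ep$-dependent factors (the factorial $(1/\ep)!$, the factor $4^{1/\delta}$, and the constants from Stirling) must all combine cleanly into a single expression of the form $C_3^{1/\ep}$ with a truly universal $C_3$. The reason the lemma states $n^{2-1/(4\ep)}$ rather than the sharper $n^{2-1/\ep}$ that the raw estimate produces is precisely to create enough slack in the $n$-exponent to absorb these factorial-type losses.
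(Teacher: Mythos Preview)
Your split at $m=\sqrt n$ and your replacement of the inner sum by its last term mirror the paper's argument. However, each regime contains a genuine gap in the execution.

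\textbf{Small $m$.} The absorption step is arithmetically wrong. From $1/\ep\le 3n/4$ you get $(1/\ep)^{1/\ep}\le n^{1/\ep}$, and substituting into $D^{1/\ep}(1/\ep)^{1/\ep}n^{2-1/\ep}$ yields $D^{1/\ep}n^{2}$, which is \emph{not} bounded by $C_3^{1/\ep}n^{2-1/(4\ep)}$ for large $n$. The correct observation is that the small-$m$ range $\lceil 1/\ep\rceil\le m\le\sqrt n$ is empty unless $1/\ep\le\sqrt n$; this gives $(1/\ep)^{1/\ep}\le n^{1/(2\ep)}$ and hence $n^{2-1/(2\ep)}$, which suffices. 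The paper avoids the factorial factor altogether: it bounds the inner sum by $n$ times its largest term and writes each summand as
\[
n^{2}\Bigl(\tfrac{m^{1-2\ep}}{n^{1-2\ep}}\,2^{2\ep/\delta}e^{2\ep(1+\log(1/\ep))}\Bigr)^{m},
\]
observes that the second factor in parentheses is a universal constant once $\ep\le C_0\delta$, bounds $(m/n)^{1-2\ep}\le n^{-1/4}$ for $m\le\sqrt n$, and sums the resulting geometric series to obtain $C_3^{1/\ep}n^{2-1/(4\ep)}$ directly.

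\textbf{Large $m$.} The lower bound $h(\alpha)\ge\alpha$ is too weak near the bottom of the range. With it, your upper bound for $\varphi_\ep(\alpha)/\alpha$ becomes $2\ep\log_2\!\bigl(e/(\ep\alpha)\bigr)+2\ep/\delta-1$; for $\alpha\approx 1/\sqrt n$ the first term is of order $\ep\log n$, so the claimed inequality $\varphi_\ep(\alpha)\le -c_0\alpha$ with $c_0$ independent of $n$ fails there. Using $h(\alpha)\ge\alpha\log_2(1/\alpha)$ instead cancels the offending logarithm and repairs the argument. The paper's route is more elementary and needs no entropy: from the same displayed expression, for $\sqrt n\le m\le 3n/4$ one has $(m/n)^{1-2\ep}\le(3/4)^{1/2}$, so each summand is at most $n^{2}(6/7)^{m/2}$, and summing over $m\ge\sqrt n$ gives $C_1e^{-C_2\sqrt n}$.
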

\begin{proof}
Recall the well-known inequalities
\[
\frac{n^k}{k^k} \le {n \choose k}\le \frac{e^k n^k}{k^k}.
\]
Also, check that the map $x \mapsto (en/x)^x$ is increasing in $[1,n]$. Thus,
\begin{align*}
&\sum_{1/\ep \le m\le 3n/4}\frac{1}{{n\choose m}}\biggl(\sum_{k< \ep m} {n\choose k}2^{ k/\delta}\biggr)^2\\
&\le \sum_{1/\ep \le m\le 3n/4} \frac{m^m}{n^m} n^2 \biggl(\frac{e n 2^{1/\delta}}{\ep m}\biggr)^{2\ep m}\\
&= n^2 \sum_{1/\ep \le m\le 3n/4}\biggl(\frac{m^{1-2\ep}}{n^{1-2\ep }}2^{2\ep/\delta } e^{2 \ep(1+\log (1/\ep))}\biggr)^m.
\end{align*}
Now choose $\ep$ so small that $1-2\ep \ge 1/2$ and 
\begin{align*}
2^{2\ep/\delta} e^{2 \ep(1+\log (1/\ep))} < \sqrt{\frac{8}{7}}.
\end{align*}
Note that this can be ensured by choosing $\ep$ to be less than some universal constant times $\delta$ (noting that $\delta\le 1$). With such a choice of $\ep$, we get
\begin{align*}
&n^2 \sum_{\sqrt{n} \le m\le 3n/4}\biggl(\frac{m^{1-2\ep}}{n^{1-2\ep }}2^{2 \ep/\delta } e^{2 \ep(1+\log (1/\ep))}\biggr)^m\\
&\le n^2 \sum_{\sqrt{n} \le m\le 3n/4}\biggl(\sqrt{\frac{3}{4}} \sqrt{\frac{8}{7}}\biggr)^m \\
&=  n^2 \sum_{\sqrt{n} \le m\le 3n/4}\biggl(\frac{6}{7}\biggr)^{m/2}\le C_1 e^{-C_2 \sqrt{n}},
\end{align*}
where $C_1$ and $C_2$ are universal  constants. On the other hand, if $1/\ep\le m\le \sqrt{n}$, then 
\begin{align*}
\frac{m^{1-2\ep}}{n^{1-2\ep }} \le n^{-1/4},
\end{align*}
and so
\begin{align*}
&n^2 \sum_{1/\ep\le m\le \sqrt{n}}\biggl(\frac{m^{1-2\ep}}{n^{1-2\ep }}2^{2\Delta \ep } e^{2 \ep(1+\log (1/\ep))}\biggr)^m\\
&\le n^2 \sum_{1/\ep\le m\le \sqrt{n}}\biggl(n^{-1/4} \sqrt{\frac{8}{7}}\biggr)^m\le C_3^{1/\ep} n^{2-1/4\ep}. 
\end{align*}
Adding up the two parts, we get the required bound.
\end{proof}
We are now ready to finish the proof of Theorem \ref{thm2}.
\begin{proof}
By Lemmas \ref{endlmm1} and \ref{endlmm2}, we see that there are universal constants $C_4,C_5>0$ and $C_6\in (0,1)$ such that if we choose $\ep = C_6\delta$, then 
\begin{align}\label{pebd}
\pp(E)\le C_4 n^{-C_5 /\delta}.
\end{align}
Suppose that $E$ does not happen. Then for any $A\in \ma_\ep$ with $1/\ep\le |A|\le n/2$, we have  $f(\me(A))\notin \ma_\ep$ and hence
\[
|\me\circ f\circ \me(A)|\ge (1+\ep) |f\circ \me(A)|\ge (1+\ep)|A|.
\]
On the other hand,  if $A\not\in \ma_\ep$, then
\[
|\me\circ f\circ\me(A)|\ge |\me(A)| \ge (1+\ep)|A|.
\]
Finally, if $|A|\le 1/\ep$ and $|A|\le n/2$, then since the $P$-chain is irreducible,
\[
|\me\circ f\circ\me(A)|\ge |\me(A)| \ge |A|+1 \ge (1+\ep) |A|.
\]
So if $E$ does not happen, then the random bijection $f$ satisfies the condition \eqref{expcond}.  By \eqref{pebd}, this completes the proof of Theorem \ref{thm2}.
\end{proof}

\section{A different speedup}\label{fibosec}
Going back to the original example (simple random walk on $\zz_n$), there is a different way to speed this up. Define a process $X_0,X_1,\ldots$ on $\zz_n$ by $X_0=0$, $X_1=1$ and 
\[
X_{k+1} = X_{k}+X_{k-1} +\ep_{k+1}\pmod n,
\]
where $\ep_i$ are independent, taking values $0$, $1$ and $-1$ with equal probabilities. Let $P_k(j) := \pp(X_k = j)$ and $U(j):=1/n$ for $j\in \zz_n$. 
\begin{thm}\label{fibothm}
For any $n\ge 22$ and $k = 5[(\log n)^2 + c\log n]$, 
\[
\|P_k- U\|_{TV} \le 1.6e^{-c/2}. 
\]
\end{thm}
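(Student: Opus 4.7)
The plan is to apply Fourier analysis on $\zz_n$. Iterating the recursion gives
\[
X_k = F_k + \sum_{j=2}^{k} F_{k-j+1}\,\ep_j \pmod n,
\]
where $F_m$ denotes the $m$-th Fibonacci number. Since the $\ep_j$'s are i.i.d.\ with characteristic function $\hat\mu(t) = \tfrac13(1+2\cos(2\pi t))$, the Fourier coefficient of $P_k$ at $\theta\in\zz_n$ is a unit-modulus phase times $\prod_{m=1}^{k-1}\hat\mu(\theta F_m/n)$. The standard upper bound lemma then gives
\[
4\,\|P_k-U\|_{TV}^2\le\sum_{\theta=1}^{n-1}\prod_{m=1}^{k-1}\hat\mu(\theta F_m/n)^2,
\]
and an elementary trigonometric estimate shows $\hat\mu(t)^2\le\exp(-c_0\|t\|^2)$ for a universal $c_0>0$, where $\|x\|$ denotes distance to the nearest integer. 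So each product is at most $\exp\bigl(-c_0 S(\theta)\bigr)$, with $S(\theta):=\sum_{m=1}^{k-1}\|\theta F_m/n\|^2$.

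The crux of the argument is then the uniform lower bound
\[
S(\theta)\ge c_1(\log n+c)\qquad\text{for every }\theta\in\zz_n\setminus\{0\}.
\]
By a standard reduction (writing $g=\gcd(\theta,n)$ and $\theta=g\theta'$, the sum $S(\theta)$ equals the analogous sum with $n$ replaced by $n/g$ and $\theta$ by $\theta'$, with $\gcd(\theta',n/g)=1$), it suffices to treat the case $\gcd(\theta,n)=1$.

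The key input is the exponential growth of the Fibonacci numbers, $F_m\asymp\varphi^m$ with $\varphi=(1+\sqrt 5)/2$. Set $B:=\lceil\log_\varphi(6n)\rceil=O(\log n)$, so that $F_B>n$. I claim that in every window $[M,M+B)$ of $B$ consecutive indices, some $m$ in the window has $\|\theta F_m/n\|\ge 1/6$. Suppose not, so $|\eta_m|<1/6$ for every $m\in[M,M+B)$, where $\eta_m$ is the signed residue of $\theta F_m/n$ in $(-1/2,1/2]$. A short induction using the Fibonacci recurrence shows $\eta_{M+j}=F_{j-1}\eta_M+F_j\eta_{M+1}$ exactly (without reduction mod~$1$), because at every step the "integer part" in the triangle inequality is forced to vanish. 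Taking $j=B-1$ and using $F_B>n$ together with the fact that $(n\eta_M,n\eta_{M+1})\in\Z^2$ is a rational pair with denominator $n$, one forces this integer vector to lie in a strip of area $\lesssim 1$ around the contracting eigendirection of $M=\bigl(\begin{smallmatrix}0&1\\1&1\end{smallmatrix}\bigr)$, which contains no nonzero lattice point. Thus $\theta F_M\equiv\theta F_{M+1}\equiv 0\pmod n$, and since $\gcd(F_M,F_{M+1})=1$ this contradicts $\theta\not\equiv 0$.

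Granted the claim, partitioning $\{1,\ldots,k-1\}$ into $\lfloor(k-1)/B\rfloor$ disjoint blocks of length $B$ and taking one "large" index per block gives $S(\theta)\ge(k-1)/(36B)\gtrsim k/\log n$. Substituting $k=5[(\log n)^2+c\log n]$ makes this $\gtrsim\log n+c$, as required. Combining the two pieces,
\[
4\,\|P_k-U\|_{TV}^2\le(n-1)\exp(-c_0c_1(\log n+c))\le C\,e^{-c},
\]
for appropriate universal constants, which translates to the claimed $\|P_k-U\|_{TV}\le 1.6\,e^{-c/2}$ once the constants are tracked through (the numerical factor $1.6$ absorbs the slack).

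The main obstacle is the rationality / "no lattice point in the contracting strip" step: qualitatively, the contracting eigenvalue $-1/\varphi$ is irrational, so no nonzero integer vector aligns exactly with the contracting direction, but upgrading this to the quantitative statement "no nonzero lattice point in the strip, provided $B$ is chosen so $F_B>n$" is delicate. The choice of the factor $5$ in $k$ is made precisely so that, after combining the constants $c_0$, $c_1$ and the block length $B$, the final bound comes out $\le 1.6\,e^{-c/2}$; the hypothesis $n\ge 22$ is needed so the block length $B$ is small enough relative to $k$ that the block partitioning is meaningful.
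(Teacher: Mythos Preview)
Your Fourier setup, the upper bound lemma, and the block-partitioning strategy all match the paper's approach. The genuine gap is exactly the step you flag as ``delicate'': the claim that the strip around the contracting eigendirection contains no nonzero lattice point is \emph{false} with your choice of $B=\lceil\log_\varphi(6n)\rceil$. Take $(a,b)=(F_{k+1},-F_k)$ where $F_{k+1}$ is the largest Fibonacci number below $n/6$. Then the sequence $c_j:=F_{j-1}a+F_jb$ satisfies the Fibonacci recursion with $c_0=F_{k+1}$, $c_1=-F_k$, and one checks $c_j=(-1)^jF_{k+1-j}$; hence $|c_j|<n/6$ for all $j\in[0,2k+2]$. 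Since $2k+2\approx 2\log_\varphi n>B$, this nonzero lattice point survives every constraint in your window. Geometrically, this is the situation where $(\eta_M,\eta_{M+1})$ sits almost exactly on the contracting eigenline: the iterates shrink for about $k$ steps, pass near zero, and only then start to grow again---so a single window of length $\log_\varphi n$ cannot force a large residue. Your inductive identity $\eta_{M+j}=F_{j-1}\eta_M+F_j\eta_{M+1}$ is correct, but it does not by itself rule out this scenario.

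The paper deals with precisely this obstacle by abandoning the lattice picture and arguing directly on the residues $b_m$ of $\theta F_m\bmod n$ (Proposition~\ref{mainprop}). It splits into cases according to whether two consecutive residues lie on the same or opposite sides of $0$ (after centering). When they have the same sign and are small, the Fibonacci recursion forces geometric \emph{growth}, reaching $[n/3,2n/3]$ within $2\log_2 n$ steps (Lemmas~\ref{claim1}--\ref{claim2}). The hard case is yours: opposite signs, both small. There the paper shows (Lemma~\ref{claim3}) that as long as the signs keep alternating, $|d_{i+1}|<\tfrac{2}{3}|d_i|$ every three steps, so within $\log_{3/2}n$ steps the alternation must break---either a residue hits $0$, or two consecutive residues acquire the same sign---and then the growth lemmas finish the job. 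This yields a window of length $8+3\log_{3/2}n$ rather than $\log_\varphi(6n)$; the extra logarithmic factor is exactly the cost of first contracting to near zero before expanding back out, and your argument has no mechanism to pay it. Also, once the window is in hand, the paper uses the sharper pointwise bound $|\tfrac13+\tfrac23\cos(2\pi x/n)|\le\tfrac13$ for $x\in[n/3,2n/3]$ rather than the Gaussian majorant $\hat\mu(t)^2\le e^{-c_0\|t\|^2}$, which is what makes the explicit constants $5$ and $1.6$ come out.
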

{\it Remark.} The  best lower bound we have is that at least $\log n$ steps are required. It is natural to suspect that this is the right answer, but numerical experiments do not make a clear case. At any rate, we find it interesting that a simple recurrence speeds things up from order $n^2$ to order $(\log n)^2$. 

By running the recurrence, the chain can be represented as 
\begin{align*}
X_k = F_k + F_{k-1}\ep_2 + F_{k-2}\ep_3+\cdots F_1 \ep_{k} \pmod n,
\end{align*}
with $F_k$ the usual Fibonacci numbers $0,1,1,2,3,5,\ldots$ (so $F_5=5$). The (mod $n$) Fourier transform of $P_k$ is 
\begin{align}\label{fourier}
\hat{P}_k(a) = \ee(e^{2\pi i a X_k/n}) = e^{2\pi i a F_k/n} \prod_{b=1}^{k-1} \biggl(\frac{1}{3}+\frac{2}{3}\cos (2\pi a F_b/n)\biggr). 
\end{align}
We will use the inequality (see \citet[Chapter 3]{diaconisbook}) 
\begin{align}\label{fourierineq}
4\|P_k- U\|_{TV}^2 \le \sum_{a=1}^{n-1} |\hat{P}_k(a)|^2
\end{align}
to obtain an upper bound on the total variation distance between $P_k$ and $U$. We thus need to know about the distribution of Fibonacci numbers mod $n$. We were surprised that we couldn't find what was needed in the literature (see \citet{diacfibo}).  The following preliminary proposition is needed. 
Let $x_0,x_1,x_2,\ldots$ be any sequence of integers satisfying the Fibonacci recursion
\[
x_k = x_{k-1}+x_{k-2}.
\]
Take any $n$ such that at least one $x_i$ is not divisible by $n$. Let $b_k$ be the remainder of $x_k$ modulo $n$. We will prove the following property of this sequence.
\begin{prop}\label{mainprop}
For any $j$, there is some $j\le k \le j+8+3\log_{3/2}n$ such that $b_k\in [n/3, 2n/3]$.
\end{prop}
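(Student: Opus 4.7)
The plan is to argue by contradiction: assume $b_k$ avoids $M := [n/3, 2n/3]$ for every $k \in [j, j+N]$ with $N := 8 + 3\log_{3/2} n$, and derive a contradiction. For each such $k$ I classify its \emph{type} $T_k \in \{L, H\}$ depending on whether $b_k \in [0, n/3)$ (call $L$) or $b_k \in (2n/3, n)$ (call $H$), and introduce the folded values $\tilde b_k := b_k$ if $T_k = L$, $\tilde b_k := n - b_k$ if $T_k = H$, so that $\tilde b_k \in [0, n/3)$. A case analysis on the pair $(T_k, T_{k+1})$ and on whether the raw sum $b_k + b_{k+1}$ exceeds $n$ yields the recursion: if $T_k = T_{k+1}$ (\emph{same type}) then $\tilde b_{k+2} = \tilde b_k + \tilde b_{k+1}$, while if $T_k \neq T_{k+1}$ (\emph{alternating}) then $\tilde b_{k+2} = |\tilde b_k - \tilde b_{k+1}|$ and $T_{k+2}$ takes the type of whichever of $\tilde b_k, \tilde b_{k+1}$ is larger. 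Under the $M$-avoidance hypothesis, a same-type pair forces $T_{k+2}$ to keep the same type (else $b_{k+2}$ already lies in $M$).

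Two contrasting phases emerge. In a same-type stretch, the additive rule gives Fibonacci-type growth: if $(\tilde b_k, \tilde b_{k+1}) = (u_0, u_1) \neq (0,0)$ (non-triviality of the sequence modulo $n$ rules out the all-zero case), then $\tilde b_{k+m} = F_{m-1} u_0 + F_m u_1 \ge F_{m-1}$, so within $\log_\phi(n/3) + O(1)$ steps $\tilde b$ surpasses $n/3$ and forces $b$ into $M$. In an alternating stretch, the continuation of the type-alternation requires $\tilde b_k > \tilde b_{k+1}$ at each step, so $\tilde b$ is strictly decreasing; tracking three consecutive alternating transitions and using that sustained alternation pins the ratio $\tilde b_{k+1}/\tilde b_k$ into the corridor $(1/2, 2/3)$ yields the key contraction
\[
\tilde b_{k+3} = 2\tilde b_{k+1} - \tilde b_k \le \tfrac{2}{3}\,\tilde b_k,
\]
so $\tilde b$ decays by a factor of at least $3/2$ every three steps. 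If the ratio $\tilde b_{k+1}/\tilde b_k$ falls outside $(1/2, 2/3)$, the alternation breaks immediately into a same-type pair, and the Fibonacci phase bound takes over.

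Iterating the contraction gives $\tilde b_{j+3m} \le (2/3)^m \cdot (n/3)$, so $\tilde b$ is forced to $0$ within at most $3\log_{3/2} n + O(1)$ alternating steps. When $\tilde b_k = 0$, i.e., $b_k = 0$ with $T_k = L$, the next alternating transition produces $\tilde b_{k+2} = \tilde b_{k+1}$ and $T_{k+2} = T_{k+1} = H$, a forbidden same-type pair $(T_{k+1}, T_{k+2}) = (H, H)$. By the Fibonacci-phase bound, $b$ now lands in $M$ within $O(\log n)$ further steps --- comfortably inside the window of length $N = 8 + 3\log_{3/2} n$. This contradiction proves the proposition.

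The main technical obstacle is the three-step decay estimate $\tilde b_{k+3} \le \tfrac{2}{3}\tilde b_k$ in the alternating regime: verifying it requires careful bookkeeping of the continuation constraints at three consecutive alternating transitions, and confirming that any ratio $\tilde b_{k+1}/\tilde b_k$ leaving the narrow sustaining corridor $(1/2, 2/3)$ immediately triggers a same-type pair so that the complementary Fibonacci-growth scenario kicks in. Interlocking the decaying alternating phase with the growing same-type phase --- each operating on the $\log n$ time scale --- with matching constants is what closes the contradiction within the asserted bound.
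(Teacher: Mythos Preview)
Your approach is essentially the paper's: split the evolution into a ``same-type'' phase (Fibonacci growth of the folded values) and an ``alternating'' phase (geometric decay), and show each lasts $O(\log n)$ steps. The paper carries this out with the signed residues $d_i\in(-n/2,n/2]$ (so that $|d_i|=\tilde b_i$ and sign alternation is exactly your type alternation); it obtains the per-step decay $|d_{i+1}|<\tfrac{2}{3}\,|d_i|$ directly from the identity $d_{i+4}=3d_{i+1}+2d_i$ together with the sign constraint on $d_{i+4}$, which is the same computation as your corridor bound.

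Your accounting, however, does not close. First, an arithmetic slip: from the corridor constraint $\tilde b_{k+1}<\tfrac{2}{3}\,\tilde b_k$ you get
\[
\tilde b_{k+3}=2\tilde b_{k+1}-\tilde b_k<\tfrac{4}{3}\tilde b_k-\tilde b_k=\tfrac{1}{3}\,\tilde b_k,
\]
not $\tfrac{2}{3}\,\tilde b_k$. In fact the upper edge of your own corridor already \emph{is} the per-step decay $\tilde b_{k+1}<\tfrac{2}{3}\,\tilde b_k$ (valid whenever alternation persists four more steps), so passing to a three-step bound was unnecessary and weakens the estimate. Second, and this is the real gap: with your stated rate of $\tfrac{2}{3}$ per three steps, the alternating phase alone consumes roughly $3\log_{3/2}n$ steps, leaving no room for the subsequent Fibonacci phase inside the window $8+3\log_{3/2}n$; the assertion ``comfortably inside the window'' is simply false as written. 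With the correct per-step $\tfrac{2}{3}$ decay the alternating phase lasts at most $\log_{3/2}n+O(1)$ steps, the Fibonacci phase at most $2\log_2 n<2\log_{3/2}n$ steps, and only then does the sum fit inside $3\log_{3/2}n+O(1)$.
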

We need several lemmas to prove this proposition. 
\begin{lmm}\label{prelim1}
There cannot exist $k$ such that $b_k=b_{k+1}=0$.
\end{lmm}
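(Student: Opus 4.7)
The plan is to argue by contradiction: suppose $b_k = b_{k+1} = 0$ for some index $k$, meaning $n \mid x_k$ and $n \mid x_{k+1}$. I would then propagate divisibility by $n$ to every index in the sequence, contradicting the standing assumption that at least one $x_i$ is not divisible by $n$.

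First, I would run the recursion forward: from $x_{k+2} = x_{k+1} + x_k$ we get $n \mid x_{k+2}$, and by a trivial induction $n \mid x_j$ for every $j \ge k$. Next, I would run the recursion backward, which is valid because the Fibonacci recursion can be rewritten as $x_{k-1} = x_{k+1} - x_k$. This gives $n \mid x_{k-1}$, and inducting downward yields $n \mid x_j$ for every $j \le k+1$ (and hence for every $j \le 0$ as well, assuming the sequence is indexed from $0$; otherwise the forward-only argument already suffices once combined with the backward step to cover all relevant indices).

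Combining the two directions, $n \mid x_j$ for all $j$ in the range of the sequence, contradicting the hypothesis that some $x_i$ is not divisible by $n$. Hence no such $k$ exists.

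There is no serious obstacle here — the proof is a two-line argument using forward and backward propagation of the recurrence modulo $n$. The only subtlety worth mentioning is that the backward step requires the Fibonacci recursion to be invertible over $\Z$, which it is since we are only using subtraction of integers; no division is needed, and no hypothesis beyond the given one is required.
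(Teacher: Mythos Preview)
Your argument is correct and matches the paper's proof exactly: both argue by contradiction, use the Fibonacci recursion (forward and backward) to propagate divisibility by $n$ to every term, and then invoke the standing assumption that some $x_i$ is not divisible by $n$. The paper states it more tersely, but the content is identical.
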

\begin{proof}
If $b_k=b_{k+1}=0$ for some $k$, then $x_{k+1}$ and $x_k$ are both divisible by $n$. So the Fibonacci recursion implies that $x_j$ is divisible by $n$ for all $j$. But we chose $n$ such that at least one $x_i$ is not divisible by $n$. Thus, we get a contradiction.
\end{proof}
\begin{lmm}\label{claim1}
If $b_j,b_{j+1}\in [1,n/3)$ for some $j$, then there is some $j+2\le k< j+2\log_2n$ such that $b_k\in [n/3, 2n/3]$. 
\end{lmm}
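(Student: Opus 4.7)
The plan is to observe that as long as two consecutive terms $b_{j+i}, b_{j+i+1}$ both lie in $[1,n/3)$, their sum lies in $[2,2n/3) \subset [0,n)$, so no modular reduction occurs when computing $b_{j+i+2}$; that is, $b_{j+i+2} = b_{j+i} + b_{j+i+1}$ as an ordinary integer identity. Starting from the hypothesis $b_j,b_{j+1}\in [1,n/3)$, the sequence $b_j, b_{j+1}, b_{j+2}, \ldots$ therefore satisfies the integer Fibonacci recursion verbatim, for as long as every term remains below $n/3$.

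Let $m\ge 2$ be the smallest index such that $b_{j+m}\ge n/3$. I would first argue that $m$ is well-defined: while the terms stay in $[1,n/3)$, an easy induction using $b_j,b_{j+1}\ge 1$ gives $b_{j+i}\ge F_{i+1}$, where $F_1=F_2=1$ are the usual Fibonacci numbers. Since Fibonacci numbers grow unboundedly, some term must reach or exceed $n/3$, so $m$ exists.

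Next, I would verify that $k:=j+m$ is the desired index. By minimality of $m$, both $b_{j+m-1}$ and $b_{j+m-2}$ are in $[1,n/3)$; hence (no modular reduction at this step either, since their sum is below $2n/3<n$)
\[
\frac{n}{3}\le b_{j+m} = b_{j+m-1}+b_{j+m-2} < \frac{2n}{3},
\]
so $b_k\in [n/3,2n/3)\subseteq [n/3,2n/3]$, and clearly $k\ge j+2$.

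It remains to bound $m$. Still by minimality, $b_{j+m-1}<n/3$, and we have $b_{j+m-1}\ge F_m$, so $F_m<n/3$. A quick induction (using $\sqrt{2}+1>2$) shows $F_m\ge 2^{(m-2)/2}$ for all $m\ge 2$, whence $2^{(m-2)/2}<n/3$ and therefore
\[
m < 2 + 2\log_2(n/3) = 2 + 2\log_2 n - 2\log_2 3 < 2\log_2 n,
\]
the last inequality holding because $\log_2 3 > 1$. Thus $k=j+m$ satisfies $j+2\le k < j+2\log_2 n$, as required. The argument is essentially elementary; the only point that requires care is the repeated observation that no modular reduction occurs while the sequence stays below $n/3$, which is what allows the Fibonacci lower bound to be applied at every step.
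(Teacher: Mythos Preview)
Your proof is correct and follows essentially the same approach as the paper's: both exploit the fact that while consecutive terms remain in $[1,n/3)$ no modular reduction occurs, so the sequence grows at least like a Fibonacci (hence exponentially), forcing a term to land in $[n/3,2n/3)$ within $2\log_2 n$ steps. The only cosmetic difference is that the paper proves the exponential lower bound $b_i\ge 2^{(i-j)/2}$ directly by induction, whereas you route through $b_{j+i}\ge F_{i+1}$ and then bound $F_m\ge 2^{(m-2)/2}$; the resulting estimates are equivalent.
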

\begin{proof}
Since the $b_i$'s satisfy the Fibonacci recursion modulo $n$, it follows that if $b_i$ and $b_{i+1}$ are both in $[1,n/3)$ for some $i$, then $b_{i+2} = b_{i+1}+b_i\in [1, 2n/3)$. So there exists an index $k$ which is the first index bigger than $j+1$ such that $b_k\in [n/3, 2n/3)$. We claim that for any $i\in [j+2,k]$, $b_i \ge 2^{(i-j)/2}$. To see this,  first note that for any $i\in [j+2,k]$, $b_i=b_{i-1}+b_{i-2}$. Therefore, since $b_j, b_{j+1}\ge 1$, the claim is true for $i=j+2$. Suppose that it is true up to $i-1$. Then
\begin{align*}
b_i &= b_{i-1}+b_{i-2}\ge 2^{(i-1-j)/2} + 2^{(i-2-j)/2}\\
&= 2^{(i-j)/2}(2^{-1/2} + 2^{-1}) \ge 2^{(i-j)/2}. 
\end{align*}
In particular, $b_k \ge 2^{(k-j)/2}$. But we know that $b_k < 2n/3$. Combining these two inequalities gives $k-j< 2\log_2 n$.
\end{proof}

\begin{lmm}\label{claim2}
If $b_j,b_{j+1}\in (2n/3,n-1]$ for some $j$, then there is some $j+2\le k< j+2\log_2n$ such that $b_k\in [n/3, 2n/3]$. 
\end{lmm}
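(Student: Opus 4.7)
The plan is to reduce Lemma~\ref{claim2} to Lemma~\ref{claim1} via the involution $c_i := n - b_i$. If $b_i \in (2n/3, n-1]$ then $c_i \in [1, n/3)$, so this involution sends the ``upper third'' regime considered here to the ``lower third'' regime of Lemma~\ref{claim1}. The key algebraic observation is that whenever $b_{i-1}, b_{i-2}$ both lie in $(2n/3, n-1]$, their sum lies in $(4n/3, 2n-2]$, so the modular reduction in the $b$-recursion forces $b_i = b_{i-1} + b_{i-2} - n$; rewriting in terms of $c$ gives $c_i = 2n - b_{i-1} - b_{i-2} = c_{i-1} + c_{i-2}$. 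Hence, as long as the $b$-sequence stays in the upper third, the $c$-sequence obeys the \emph{integer} Fibonacci recurrence, with no modular reduction needed.

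Next I would let $k$ be the smallest index with $k \ge j+2$ such that $b_k \notin (2n/3, n-1]$; such a $k$ must exist, since otherwise $c$ would grow without bound inside $[1, n/3)$, which is absurd. By the observation above, $c$ satisfies the integer Fibonacci recurrence on the interval $[j, k]$, and in particular $c_{k-1}, c_{k-2} \in [1, n/3)$, so $c_k = c_{k-1} + c_{k-2} \in [2, 2n/3)$. Therefore $b_k = n - c_k \in (n/3, n-2]$, and combining this with the defining property $b_k \notin (2n/3, n-1]$ we conclude $b_k \in (n/3, 2n/3] \subseteq [n/3, 2n/3]$, which is the desired containment.

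For the upper bound $k - j < 2\log_2 n$, I would mimic verbatim the growth argument from the proof of Lemma~\ref{claim1}. Since $c_j, c_{j+1} \ge 1$ and $c$ obeys the integer Fibonacci recurrence on $[j,k]$, a straightforward induction (identical to the one in Lemma~\ref{claim1}) yields $c_i \ge 2^{(i-j)/2}$ for every $i \in [j+2,k]$. In particular $2^{(k-j)/2} \le c_k < 2n/3 < n$, which gives $k - j < 2\log_2 n$, as required.

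The entire argument amounts to the symmetry $b \leftrightarrow n - b$ applied to Lemma~\ref{claim1}; the only delicate point is the modular bookkeeping at the ``exit'' index $k$, namely that $c_k = c_{k-1} + c_{k-2}$ really holds as an integer identity (which requires $b_{k-1}, b_{k-2}$ to still lie in the upper third, and this is precisely how $k$ was chosen), and that the resulting bound $c_k < 2n/3$ prevents $b_k$ from slipping into $[0, n/3)$. Both points follow from a one-line calculation, so I do not foresee any genuine obstacle.
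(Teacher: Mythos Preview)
Your proposal is correct and follows the same approach as the paper: both use the involution $c_i := n - b_i$ to reduce to Lemma~\ref{claim1}. The paper's version is slightly slicker in that it simply observes the $c_i$'s satisfy the Fibonacci recursion modulo $n$ globally and then invokes the proof of Lemma~\ref{claim1} directly, whereas you unpack the modular bookkeeping by hand and re-derive the exit index and growth bound; but the substance is identical.
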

\begin{proof}
Define $c_i := n - b_i$ for each $i$. Then the $c_i$'s also satisfy the Fibonacci recursion modulo $n$. Moreover, $c_j, c_{j+1}\in [1,n/3)$. Therefore by the proof of Lemma \ref{claim1}, there exist $k<j+2\log_2 n$ such that $c_k\in [n/3,2n/3]$. But this implies that $b_k\in [n/3,2n/3]$.
\end{proof}

\begin{lmm}\label{claim3}
If $b_j\in [1,n/3)$ and $b_{j+1}\in (2n/3,n-1]$ for some $j$, then there is some $j+2\le k< j+6+3\log_{3/2}n$ such that $b_k\in [n/3, 2n/3]$. 
\end{lmm}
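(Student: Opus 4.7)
The plan is to analyze the iterates $b_{j+k}$ in the signed representation mod $n$ (residues in $(-n/2, n/2]$), in which $b_j = a \in [1, n/3)$ and $b_{j+1} \equiv -c$ with $c \in [1, n/3)$ form an opposite-sign pair. The goal is to show that this pair cannot sustain opposite signs for too many iterations before one of three exits occurs: the iterate becomes $0$, the next consecutive pair is same-signed (triggering Lemma \ref{claim1} or Lemma \ref{claim2}), or the iterate itself already lies in $[n/3, 2n/3]$.

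First I would dispose of the one-step cases by computing $b_{j+2} = a - c$ (an integer identity since $|a-c| < n/3 < n/2$). If $a = c$, then $b_{j+2} = 0$ and $b_{j+3} = b_{j+4} = n - c \in (2n/3, n-1]$, so Lemma \ref{claim2} applies at index $j+3$. If $a < c$, then $b_{j+2}$ is negative of small magnitude, so $b_{j+1}, b_{j+2} \in (2n/3, n-1]$ and Lemma \ref{claim2} applies at index $j+1$. Only the remaining case $a > c$ yields a fresh opposite-sign pair $(b_{j+1}, b_{j+2}) = (-c, a-c)$ on which to iterate.

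The core argument is a shrinkage estimate. Define $S_i := |b_{j+i}| + |b_{j+i+1}|$ in the signed representation. The key observation is that whenever the pair $(b_{j+i}, b_{j+i+1})$ has opposite signs with the first of strictly larger magnitude $A > B$, then $b_{j+i+2}$ has magnitude $A - B$ with the sign of $b_{j+i}$, so the new pair again has opposite signs and $S_{i+1} = B + (A-B) = A$; hence $S_{i+1}/S_i = A/(A+B) < 2/3$ exactly when $A < 2B$. If instead $A \ge 2B$ at some step, then $b_{j+i+2}$ has magnitude $\ge B$ and shares sign opposite to $b_{j+i+1}$, so the pair after that is same-signed and the iteration terminates in one more step, invoking Lemma \ref{claim1} (or \ref{claim2}). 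Thus the slower shrinkage can occur at most at one terminal step. Since $S_0 = a + c < 2n/3$ and $S_i \ge 2$ for as long as opposite signs persist (both entries nonzero), the opposite-sign phase must end after at most $\log_{3/2} n + O(1)$ Fibonacci additions. Adding the at most $2\log_2 n$ steps supplied by Lemma \ref{claim1} or Lemma \ref{claim2} post-exit, and using $\log_2 n < \log_{3/2} n$, yields the claimed bound $k < j + 6 + 3\log_{3/2} n$.

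The main obstacle I anticipate is the case analysis around the exit conditions and index shifts: verifying that the factor-$2/3$ shrinkage really holds at each continuation step except possibly the last, and that the edge cases where an intermediate iterate becomes $0$ or the magnitudes tie ($A = B$) are cleanly reduced to invocations of Lemma \ref{claim1} or Lemma \ref{claim2} at the appropriate shifted index. Secondarily, I would verify that the additive constant $6$ in the stated bound genuinely absorbs all the $O(1)$ slack accumulated from these index shifts and from the transition between the opposite-sign phase and the subsequent same-sign mop-up.
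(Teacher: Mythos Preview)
Your proposal is correct and follows essentially the same route as the paper: pass to the signed representatives $d_i\in(-n/2,n/2]$, show that during the opposite-sign phase the magnitudes decay geometrically with ratio $2/3$, bound the length of that phase by $O(\log_{3/2}n)$, and then hand off to Lemma~\ref{claim1} or~\ref{claim2} once a same-signed (or zero) pair appears. The only real difference is bookkeeping: the paper looks four steps ahead and uses the identity $d_{i+4}=2d_i+3d_{i+1}$ together with the alternating-sign hypothesis to deduce $|d_{i+1}|<\tfrac{2}{3}|d_i|$ directly, whereas you track $S_i=|d_{j+i}|+|d_{j+i+1}|$ and split each step into ``continuation'' ($A<2B$, giving $S_{i+1}=A<\tfrac{2}{3}S_i$) versus ``exit'' ($A\ge 2B$, forcing a same-signed pair within two more steps). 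Your packaging is arguably cleaner since it avoids the four-step lookahead, and your anticipated edge cases ($A=B$ or $A=2B$ producing a zero) are indeed harmless, each costing at most two extra indices before Lemma~\ref{claim1} or~\ref{claim2} applies; the constant $6$ comfortably absorbs this.
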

\begin{proof}
For each $i$, let
\begin{align*}
d_i := 
\begin{cases}
b_i &\text{ if } b_i\in [0,n/2],\\
b_i - n &\text{ if } b_i \in (n/2, n-1].
\end{cases}
\end{align*}
Clearly, the $d_i$'s also satisfy the Fibonacci recursion modulo $n$, and $|d_i|\le n/2$ for each $i$. Take any $i$ such that 
\begin{itemize}
\item $d_i>0$, $d_{i+1}<0$,  $d_{i+2}>0$, $d_{i+3}<0$ and $d_{i+4}>0$, and 
\item $|d_i|$ and $|d_{i+1}|$ are less than $n/3$.
\end{itemize}
Under the above conditions, $d_i\in (0,n/3)$ and $d_{i+1}\in (-n/3,0)$. This implies that $d_i+d_{i+1} < d_i <n/3$ and $d_i+d_{i+1} > d_{i+1} > -n/3$. Thus, $|d_i+d_{i+1}|< n/3$. But we know that $d_{i+2}\equiv d_i+d_{i+1}\pmod n$, and $|d_{i+2}|\le n/2$. Thus, we must have that $d_{i+2}$ is actually equal to $d_i+d_{i+1}$, and therefore also that $|d_{i+2}|<n/3$.  Similarly, $|d_{i+3}|$ and $|d_{i+4}|$ are also less than $n/3$, and satisfy the equalities $d_{i+3}=d_{i+2}+d_{i+1}$ and $d_{i+4} = d_{i+3}+d_{i+2}$. Thus, 
\begin{align*}
0 &< d_{i+4} = 3d_{i+1}+2d_i,
\end{align*}
which gives 
\[
|d_{i+1}| = -d_{i+1} < \frac{2d_i}{3}= \frac{2|d_i|}{3}. 
\]
Similarly, if $d_i<0$, $d_{i+1}>0$, $d_{i+2}<0$, $d_{i+3}>0$ and $d_{i+4}<0$, and $|d_i|$ and $|d_{i+1}|$ are less than $n/3$, then also all of the absolute values are less than $n/3$, and 
\[
0 > d_{i+4}=3d_{i+1}+2d_i,
\]
which gives
\[
|d_{i+1}| = d_{i+1} < -\frac{2d_i}{3}=\frac{2|d_i|}{3}.
\]
Now let $j$ be as in the statement of the lemma. Then $d_j>0$, $d_{j+1}<0$, and $|d_j|$ and $|d_{j+1}|$ are both less than $n/3$. Let $l$ be an index greater than $j$ such that $d_j, d_{j+1},\ldots,d_l$ are all nonzero, with alternating signs. Suppose that $l\ge j+4$. The above deductions show that $|d_i|<n/3$ for all $i\in [j,l]$ and $|d_{i+1}|< 2|d_i|/3$ for all $i\in [j,l-4]$. Since $|d_{l-4}|\ge 1$ and $|d_j|<n/3$, this proves that $l$ cannot be greater than $j+4+\log_{3/2}n$. Thus, if we define $l$ to be the largest number greater than $j$ with the above properties, then $l$ is well-defined and is $\le j+4+\log_{3/2}n$.

By the definition of $l$, it follows that either $d_{l+1}=0$, or $d_{l+1}$ has the same sign as $d_l$. We already know that $d_l$ and $d_{l-1}$ are nonzero, have opposite signs, and are in $(-n/3,n/3)$. So, if $d_{l+1}=0$, then $d_{l+2}=d_{l+3}=d_l\in (-n/3,-1]\cup[1,n/3)$, and if $d_{l+1}$ is nonzero and has the same sign as $d_l$, then $|d_{l+1}|<n/3$. In the first situation, either both $b_{l+2}$ and $b_{l+3}$ are in $[1,n/3)$ or both are in $(2n/3,n-1]$. In the second situation, we can make the same deduction about $b_l$ and $b_{l+1}$. The claim now follows by Lemmas \ref{claim1} and \ref{claim2}.
\end{proof}

\begin{lmm}\label{claim4}
If $b_j\in (2n/3, n-1]$ and $b_{j+1}\in [1,n/3)$ for some $j$, then there is some $j+2\le k< j+6+3\log_{3/2}n$ such that $b_k\in [n/3, 2n/3]$. 
\end{lmm}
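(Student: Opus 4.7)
My plan is to reduce this directly to Lemma \ref{claim3} via the symmetry $c_i := n - b_i$, exactly mirroring how Lemma \ref{claim2} was obtained from Lemma \ref{claim1}. The key observation is that the sequence defined by $c_i$ being the remainder of $-x_i$ modulo $n$ also satisfies the Fibonacci recursion modulo $n$, since if $x_k \equiv x_{k-1} + x_{k-2} \pmod n$ then $-x_k \equiv -x_{k-1} + (-x_{k-2}) \pmod n$. Moreover, at least one $c_i$ is nonzero because at least one $b_i$ is nonzero, so the sequence $c_i$ satisfies the standing hypothesis used in Proposition \ref{mainprop} and its supporting lemmas.

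Next, I would translate the hypothesis on $b_j, b_{j+1}$ into a hypothesis on $c_j, c_{j+1}$. If $b_i \in (2n/3, n-1]$ then $c_i = n - b_i \in [1, n/3)$, and if $b_i \in [1, n/3)$ then $c_i = n - b_i \in (2n/3, n-1]$. Therefore the assumption that $b_j \in (2n/3, n-1]$ and $b_{j+1} \in [1, n/3)$ gives $c_j \in [1, n/3)$ and $c_{j+1} \in (2n/3, n-1]$, which is precisely the hypothesis of Lemma \ref{claim3} for the sequence $(c_i)$.

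Applying Lemma \ref{claim3} to $(c_i)$ yields some index $k$ with $j+2 \le k < j + 6 + 3\log_{3/2} n$ such that $c_k \in [n/3, 2n/3]$. Since $b_k = n - c_k$, this gives $b_k \in [n/3, 2n/3]$, which is the desired conclusion. There is no real obstacle here — the entire argument is just a symmetry reduction, and the only thing to verify carefully is that the "at least one term nonzero" hypothesis transfers from $(b_i)$ to $(c_i)$, which is immediate. The verification that $(c_i)$ satisfies the Fibonacci recursion mod $n$ is a one-line computation.
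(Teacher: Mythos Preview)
Your proof is correct. The paper's own proof is the one-liner ``The proof is exactly the same as for Lemma~\ref{claim3},'' meaning one reruns that argument verbatim: in the signed representation $d_i$ used there, the hypothesis of Lemma~\ref{claim4} becomes $d_j<0$, $d_{j+1}>0$ with $|d_j|,|d_{j+1}|<n/3$, and the alternating-sign decay argument in the proof of Lemma~\ref{claim3} already treats both sign patterns symmetrically. You instead package this symmetry as the substitution $c_i\equiv -b_i\pmod n$ and invoke Lemma~\ref{claim3} as a black box for the sequence $(c_i)$, exactly mirroring how the paper obtained Lemma~\ref{claim2} from Lemma~\ref{claim1}. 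The two routes are equivalent; yours is arguably tidier since it avoids rereading the proof, at the modest cost of checking that the standing hypothesis (at least one term not divisible by $n$) transfers to $(-x_i)$, which you correctly note.
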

\begin{proof}
The proof is exactly the same as for Lemma \ref{claim3}.
\end{proof}
We are now ready to prove Proposition \ref{mainprop}.

\begin{proof}[Proof of Proposition \ref{mainprop}]
Take any $j$. If one of $b_j$ and $b_{j+1}$ is in $[n/3,2n/3]$, there is nothing to prove. If $b_j$ and $b_{j+1}$ are both in $[1,n/3)\cup(2n/3,n-1]$, then one of Lemmas \ref{claim1}--\ref{claim4} can be applied to complete the proof. If $b_j=0$ and $b_{j+1}\ne 0$, then $b_{j+2}=b_{j+1}\ne 0$, and so we can again apply one of the four lemmas. If $b_j\ne 0$ and $b_{j+1}=0$, then $b_{j+2}=b_{j+3} =b_{j}\ne 0$, and so again one of the four lemmas can be applied. Finally, note that by Lemma \ref{prelim1}, we cannot have $b_j=b_{j+1}=0$.
\end{proof}

Having proved Proposition \ref{mainprop}, we can now complete the proof of Theorem \ref{fibothm}. 
\begin{proof}[Proof of Theorem \ref{fibothm}]
By \eqref{fourier} and \eqref{fourierineq}, we get
\begin{align*}
4 \|P_k-U\|_{TV}^2 &\le \sum_{a=1}^{n-1} \prod_{b=1}^{k-1} \biggl(\frac{1}{3}+\frac{2}{3}\cos (2\pi a F_b/n)\biggr)^2.
\end{align*}
Now take any $1\le a\le n-1$. The sequence $aF_1, aF_2,\ldots$ satisfies the Fibonacci recursion, and the first term of the sequence is not divisible by $n$ since $a<n$. Thus, Proposition \ref{mainprop} is applicable to this sequence. Letting $m = 8+3\log_{3/2}n$, we get that at least $[(k-1)/m]$ among $aF_1,\ldots,aF_{k-1}$ are in $[n/3, 2n/3]$ modulo $n$. Now if $x\in [n/3,2n/3]$, then $\cos (2\pi x/n)\in [-1,-1/2]$, and so
\[
\frac{1}{3}+\frac{2}{3}\cos (2\pi a F_b/n) \in[-1/3, 0]. 
\]
Combining these observations, we get
\begin{align*}
4 \|P_k-U\|_{TV}^2  &\le n 9^{-[(k-1)/m]}. 
\end{align*}
It is easy to verify numerically that $30\le m\le 10\log n$ for $n\ge 22$, and also that $\log(9)/10\ge 1/5$. Thus, for $n\ge 22$ and $k= 5[(\log n)^2 + c\log n]$, 
\begin{align*}
4 \|P_k-U\|_{TV}^2  &\le n 9^{-(k-1)/m + 1}\\
&\le  n 9^{-k/m + 31/30} \le 9^{31/30}e^{-c}. 
\end{align*}
It can now be numerically verified that the claimed bound holds. 
\end{proof}

\section{Applications and open problems}\label{open}
One class of problems where uniform sampling is needed arises from exponential families.  Let $\mx$ be a finite set and $T:\mx \to\rr^d$ be a given statistic. For $\theta \in \rr^d$, let $p_\theta$ be the probability density
\[
p_\theta(x) = Z(\theta)^{-1} e^{\theta \cdot T(x)},
\]
where $Z(\theta)$ is the normalizing constant. The family $\{p_{\theta}\}_{\theta \in \rr^d}$ is called an exponential family of probability densities with sufficient statistic $T$. If $X\sim p_\theta$, the conditional distribution of $X$ given $T(X)=t$ is the uniform distribution on $\mx_t := \{x\in \rr^d: T(x)=t\}$. Such models appear in myriad statistical applications such as contingency tables and graphical models. They also appear in physics as Ising and related models. Uniform sampling on $\mx_t$ is required to test if a given dataset fits the model. An overview is in \citet{diaconissturmfels}, who introduced Gr\"obner basis techniques to do the sampling.  These are typically diffusive and it would be wonderful to have speedups. A second use for uniform sampling comes from drawing samples from the original $p_\theta$. For low-dimensional $T$, this can be done by sampling from the marginal distribution of $T$, and along the way, sampling $X$ given $T(X)=t$.  

Our main result shows that almost every bijection gives a speedup. This is not the same as having a specific bijection (such as $x\mapsto ax \pmod n$). Finding these, even in simple examples, seems challenging. One case where lots of bijections can be specified comes from permutation polynomials. Let us work mod $p$ for a prime $p$. Then a permutation polynomial is a polynomial $f$ with coefficients mod $p$ such that $j\mapsto f(j)$ is one-to-one mod $p$. Large classes of these are known. The Wikipedia entry is useful and the article by \citet{guralnick} shows how these can be found in several variables to map varieties (mod $p$) to themselves.

As an example, suppose that $(3, p-1)=1$. Then the map $j\mapsto j^3 \pmod p$ is one-to-one. The corresponding walk is
\[
X_{k+1} = X_k^3 + \ep_{k+1}\pmod p.
\]
We have no idea how to work with this but (weakly) conjecture that it mixes in order $\log p$ steps. 

Our colleague Kannan Soundararajan has suggested $f(0)=0$, $f(j) = j^{-1}\pmod p$ for $j\ne 0$. Preliminary exploration did not reveal this as an easy problem. 

It is natural to try to generalize the Fibonacci recurrence walk further. The following generalization was proved by Jimmy He, who kindly allowed us to include his result in this paper. 



Let $X$ be a finite set and $P$ be a Markov kernel on $X$. Let $f:X^n\to X$ be a function such that $f(\cdot,x_2,\dotsc, x_n)$ is a bijection for all $x_2,\dotsc, x_n\in X$. Define a Markov chain $P_f$ on $X^n$ by moving from the state $(X_1,\dotsc, X_n)$ to the state $(X_2,\dotsc, X_n, f(X_1,\dotsc, X_n))$, and then taking a step from $P$ in the last coordinate. Here the last coordinate can be viewed as a higher order Markov chain on $X$, which depends on the previous $n$ steps of the walk.

To see that this is a generalization of the Fibonacci walk, note that if $n=2$, $X=G$ is a finite Abelian group, $P$ is a random walk on $G$ generated by a measure $Q$ on $G$, and $f(x_1,x_2)=x_1+x_2$, then the walk moves from $(X_1,X_2)$ to $(X_2,X_1+X_2+\varepsilon)$ where $\varepsilon$ is drawn from $Q$, and this is exactly the original Fibonacci walk.

\begin{prop}[Due to Jimmy He, personal communication]\label{jimmy}
Assume that $P$ is lazy and ergodic, and has a uniform stationary distribution. Then $P_f$ is ergodic, and has a uniform stationary distribution.
\end{prop}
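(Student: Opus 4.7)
The plan is to decompose the transition kernel as $P_f = \Phi D$, where $\Phi$ is the permutation matrix of $\phi : (x_1,\ldots,x_n) \mapsto (x_2,\ldots,x_n, f(x_1,\ldots,x_n))$ and $D$ is the stochastic matrix that applies one step of $P$ only to the last coordinate. Bijectivity of $f(\cdot, x_2,\ldots,x_n)$ makes $\phi$ a bijection of $X^n$ (invert in the first argument given the other coordinates), so $\Phi$ is doubly stochastic; and uniform stationarity of $P$ makes $P$ doubly stochastic, whence $D$ is too. So $P_f = \Phi D$ is doubly stochastic, giving the uniform stationary distribution claim.

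For ergodicity, both irreducibility and aperiodicity follow from the same ``collapse'' argument. I first handle irreducibility by contradiction: suppose $O \subsetneq X^n$ is a non-empty $P_f$-closed subset. Uniform stationarity forces $O^c$ to be closed too (no probability can flow from $O^c$ into an already-absorbing $O$ under a stationary measure of full support), so the indicator $I := \1_O$ satisfies $I(y) = I(y')$ whenever $P_f(y,y') > 0$. Fix a tuple $\bar t = (y_2,\ldots,y_n) \in X^{n-1}$ and consider the states $(y_1, \bar t)$ as $y_1$ varies over $X$. The $P_f$-successors of $(y_1,\bar t)$ are the states $(\bar t, v)$ for $v \in N^+_P(f(y_1,\bar t))$, so $I(\bar t, v) = I(y_1, \bar t)$ for every such $v$. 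As $y_1$ runs over $X$, bijectivity of $f(\cdot, \bar t)$ makes $u := f(y_1,\bar t)$ run over all of $X$; laziness of $P$ (giving $u \in N^+_P(u)$) then yields $I(\bar t, \cdot)$ constant on every $P$-forward-neighborhood $N^+_P(u)$, and irreducibility of $P$ propagates this to constancy of $I(\bar t, \cdot)$ on all of $X$. Hence $I(y_1,\ldots,y_n) = h(y_1,\ldots,y_{n-1})$ for some $h: X^{n-1} \to \{0,1\}$. The identity $I(y) = I(\phi(y))$ becomes $h(y_1,\ldots,y_{n-1}) = h(y_2,\ldots,y_n)$, and varying one coordinate at a time forces $h$ constant, so $O \in \{\emptyset, X^n\}$, a contradiction.

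Aperiodicity is proved by the same template, replacing the indicator $I$ by the period-class index $c : X^n \to \Z/d$ for the hypothetical period $d \geq 2$. The collapse step yields $g : X^{n-1} \to \Z/d$ with $c(y_1,\ldots,y_n) = g(y_1,\ldots,y_{n-1})$, the shift identity $c(\phi(y)) = c(y) + 1$ becomes $g(y_2,\ldots,y_n) = g(y_1,\ldots,y_{n-1}) + 1 \pmod d$, and the same coordinate-by-coordinate descent forces $g$ to be constant. Then the identity reads $0 \equiv 1 \pmod d$, so $d = 1$, contradicting $d \geq 2$.

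The main obstacle is the collapse step itself: showing that a $P_f$-invariant function on $X^n$ depends only on the first $n-1$ coordinates. This is the crucial place where bijectivity of $f$ in its first argument interacts with the ergodicity of $P$. Bijectivity sends $u = f(y_1,\bar t)$ through all of $X$ as $y_1$ varies, and then laziness together with irreducibility of $P$ propagates the constancy across the $P$-graph on $X$. Once the reduction to a function on $X^{n-1}$ is in hand, the iterative descent via the shift identity is essentially formal.
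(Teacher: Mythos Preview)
Your proof is correct, but it follows a genuinely different route from the paper's. The paper argues constructively: it observes that the bijection $g:(x_1,\ldots,x_n)\mapsto (x_2,\ldots,x_n,f(x_1,\ldots,x_n))$ has some finite order $m$ on $X^n$, and then shows how to ``mimic'' a step of $P$ in the last coordinate by alternating $m-1$ lazy steps of $P$ with applications of $g$. This lets one move freely in the last coordinate while fixing the others, and iterating (with one extra application of $g$ between rounds) reaches every state; aperiodicity is then obtained by exhibiting return paths of coprime lengths $m$ and $km+1$.

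Your argument is instead structural: you show that any $P_f$-invariant $\{0,1\}$-valued function (and, for aperiodicity, any $\Z/d$-valued cyclic-class function) must be independent of the last coordinate, using bijectivity of $f(\cdot,\bar t)$ to sweep $u=f(y_1,\bar t)$ over all of $X$ and then laziness plus irreducibility of $P$ to propagate constancy of $I(\bar t,\cdot)$ across $X$. The shift identity then forces a descent through $X^{n-1},X^{n-2},\ldots$ down to a constant, yielding irreducibility and $d=1$ respectively. This is clean and avoids invoking the (possibly enormous) order $m$ of $g$; on the other hand, the paper's approach has the advantage of giving explicit paths and hence a concrete (if crude) bound on the diameter of the $P_f$-chain in terms of $m$ and the diameter of $P$. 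Both proofs use exactly the same three hypotheses --- laziness (for $u\in N^+_P(u)$), ergodicity of $P$ (for propagation), and bijectivity of $f$ in its first argument (for the sweep) --- at the same logical junctures.
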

\begin{proof}
First, note that the assumption that $f(\cdot, x_2,\dotsc, x_n)$ is a bijection for all $x_2,\dotsc, x_n\in X$ implies that the function $g:X^n\to X^n$ defined by 
\[
g(x_1,\dotsc, x_n)=(x_2,\dotsc, x_n,f(x_1,\dotsc, x_n))
\]
is a bijection. Then $P_f$ can be described as applying the function $g$, followed by a step from $P$ in the last coordinate. 

Now we describe how to mimic steps from $P$ using $P_f$. Note that since $g$ is a permutation, $g^m$ is the identity permutation for some $m$. Since $P$ is lazy, we can alternate applying $g$, and then remaining stationary when taking a step from $P$. Doing so $m-1$ times, and then applying $g$ one last time, we can then take a step from $P$ in the last coordinate. Thus, it is possible to move from $(x_1,\dotsc,x_{n-1}, x_n)$ to $(x_1,\dotsc, x_{n-1}, x_n')$ in $m$ steps of $P_f$, where $x_n'\in X$ is some state for which $P(x_n,x_n')>0$. We call this mimicking a step from $P$.

To show that $P_f$ is irreducible, we simply repeat the above procedure. Since $P$ is irreducible, we can reach any state in $X$ in the last coordinate, while keeping the first $n-1$ coordinates fixed. But now, we can apply $g$ once, and then repeat the above procedure, and now the last two coordinates can be made arbitrary (since the procedure described fixes the first $n-1$ coordinates). Repeating this $n$ times allows any state in $X^n$ to be reached.

Now we show that $P_f$ is aperiodic. Starting from any state, we can return in $m$ steps by mimicking a lazy step from $P$ using the above procedure. But we can also first take a single step from $P_f$, and then use the same procedure as in the proof of irreducibility to return to the initial state, using a path of length $km$ for some $k$. This gives a path from the state to itself of length $km+1$. As $m$ and $km+1$ are coprime, this implies that $P_f$ is aperiodic.

Finally, it is clear that the stationary distribution is uniform, since $g$ is a bijection and so preserves the uniform distribution, and taking a step from $P$ in the last coordinate also preserves the uniform distribution.
\end{proof}

Proposition \ref{jimmy} allows us to generalize the Fibonacci walk to finite non-Abelian groups. If $G$ is a finite group and $Q$ is a probability on $G$ with support generating $G$ and having nonzero mass at the identity, then Proposition \ref{jimmy} implies that the second order Markov chain  which proceeds by $X_{k+1} =X_k X_{k-1}\epsilon_{k+1}$ (with the $\epsilon_i$ drawn i.i.d.~from $Q$), is ergodic with uniform stationary distribution.

Proposition \ref{jimmy} also allows generalization of Fibonacci walks to nonlinear recurrences on finite fields. For example, take a prime $p$ such that $(3, p-1) = 1$, and consider the random walk on $\mathbb{F}_p$ which proceeds as 
\[
X_{k+1} = X_k^3 + X_{k-1} + \ep_{k+1},
\]
where $\ep_i$ are i.i.d.~uniform from $\mathbb{F}_p$. Proposition \ref{jimmy} implies that this has uniform stationary distribution, since $f(x,y) = x^3 + y$ is a bijection in $x$ for every fixed $y$. 

Getting rates of convergence in any of the above examples seems like a challenging problem.

\section*{Acknowledgments}
We thank Shirshendu Ganguly for bringing the paper \cite{gangulyperes} to our attention, which was crucial for  the proof of Theorem \ref{thm2}. We thank Kannan Soundararajan, Perla Sousi, Ron Graham, Fan Chung, Charles Bordenave, Jimmy He, Huy Pham, and especially Jonathan Hermon for many insightful comments and references. Lastly, we thank the referee for a number of useful comments.

\end{document}